\newtheorem{lemma}{Lemma}
\newtheorem{theorem}{Theorem}
\newcommand{\EE}{\mathbb{E}}
\newcommand{\PP}{\mathbb{P}}
\newcommand{\RR}{\mathbb{R}}
\newcommand{\ve}{\varepsilon}
\begin{document}

\title{An exponentially-averaged Vasyunin formula} \author{S\'ebastien Darses -- Erwan Hillion} 
%\footnote{Aix-Marseille Universit\'e, CNRS, Centrale Marseille, I2M, Marseille, France}

\address{Aix-Marseille Universit\'e, CNRS, Centrale Marseille, I2M, Marseille, France} 
\email{sebastien.darses@univ-amu.fr}

\address{Aix-Marseille Universit\'e, CNRS, Centrale Marseille, I2M, Marseille, France}
\email{erwan.hillion@univ-amu.fr}

\maketitle

\begin{abstract}
We prove a Vasyunin-type formula for an autocorrelation function arising from a Nyman-Beurling criterion generalized to a probabilistic framework. This formula can also be seen as a reciprocity formula for cotangent sums, related to the ones proven in~\cite{BC13},~\cite{ABB17}.
\end{abstract}

\section{Introduction}

The main result of this paper is the following identity:
\begin{theorem} \label{th:Vasyunin} For coprime $m,n\ge 1$, we have
\begin{eqnarray*}
mn\int_0^\infty \left(\frac{1}{mt}-\frac{1}{e^{mt}-1}\right)\left(\frac{1}{nt}-\frac{1}{e^{nt}-1}\right) dt     &=&  -\frac{1}{2} + \cal C(n+m) + \frac{m-n}{2}\log\left(\frac{n}{m}\right) \\
&& - \frac{\pi}{2} \sum_{k=1}^{n-1}\frac{m k}{n} \cot\left(\frac{m k \pi}{n}\right) - \frac{\pi}{2} \sum_{l=1}^{m-1} \frac{n l}{m} \cot\left(\frac{n l \pi}{m}\right) ,
\end{eqnarray*}
where
\begin{equation*} 
\cal C = 1-\int_0^1 \left(\frac{1}{t(e^t-1)} - \frac{1}{t^2}+\frac{1}{2t} \right)dt - \int_1^\infty \frac{dt}{t(e^t-1)}=\frac 12(\log 2\pi -\gamma).
\end{equation*}
\end{theorem}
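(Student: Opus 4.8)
The plan is to pass to Mellin transforms and reduce the left‑hand side to a Mellin--Barnes integral built from $\zeta(s)\zeta(1-s)$, and then to evaluate that integral using the functional equation of $\zeta$ together with Hurwitz's formula (equivalently, by invoking the reciprocity for the cotangent sums of~\cite{BC13},~\cite{ABB17}). Write $g(x)=\tfrac1x-\tfrac1{e^x-1}$, so the left‑hand side is $mn\int_0^\infty g(mt)g(nt)\,dt$. From $\tfrac1{e^x-1}=\tfrac1x-\tfrac12+\tfrac x{12}+\cdots$ one has $g(x)\to\tfrac12$ as $x\to0^+$ and $0<g(x)=O(1/x)$ as $x\to\infty$, so $g(mt)g(nt)=O(\min(1,t^{-2}))$ and the integral converges absolutely. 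The key computation is the identity
\[
\int_0^\infty t^{s-1}g(t)\,dt=-\Gamma(s)\zeta(s),\qquad 0<\Re s<1,
\]
proved by writing $\tfrac1{e^t-1}=\tfrac1t-\tfrac12+h(t)$ with $h$ analytic at $0$ and bounded at $\infty$, splitting $\int_0^\infty=\int_0^1+\int_1^\infty$, and comparing with $\Gamma(s)\zeta(s)=\int_0^\infty t^{s-1}/(e^t-1)\,dt$ continued from $\Re s>1$; the polar terms $\pm\tfrac1{s-1}$, $\pm\tfrac1{2s}$ cancel. Since $g(m\cdot)$ then has Mellin transform $-m^{-s}\Gamma(s)\zeta(s)$ in that strip, the Mellin--Parseval formula yields, for $0<c<1$,
\[
mn\int_0^\infty g(mt)g(nt)\,dt=\frac1{2\pi i}\int_{c-i\infty}^{c+i\infty} m\Big(\tfrac nm\Big)^{s}\,\frac{\pi}{\sin\pi s}\,\zeta(s)\zeta(1-s)\,ds ,
\]
using $\Gamma(s)\Gamma(1-s)=\pi/\sin\pi s$. (Equivalently, from $g(x)=x\int_0^\infty\{u\}e^{-xu}\,du$ one gets $mn\int_0^\infty g(mt)g(nt)\,dt=2m^2n^2\int_0^\infty\!\!\int_0^\infty\frac{\{u\}\{v\}}{(mu+nv)^3}\,du\,dv$, an Eisenstein‑type quantity whose value is classically governed by cotangent sums.)

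The heart of the proof is the evaluation of the Mellin--Barnes integral. Its integrand has double poles at $s=0$ and $s=1$ (the simple pole of $\zeta$ meeting a simple zero of $\sin\pi s$) and simple poles at the positive even and negative odd integers, but one cannot simply push the contour to $\pm\infty$: the functional equation shows that on a vertical line with $|\Re s|$ large the integrand grows like $\Gamma(|\Re s|)$, so the formal residue series diverges. Instead I would insert the functional equation $\zeta(1-s)=2(2\pi)^{-s}\Gamma(s)\cos(\tfrac{\pi s}2)\zeta(s)$ (rewriting the integrand as $m\big(\tfrac n{2\pi m}\big)^s\frac{\pi}{\sin(\pi s/2)}\Gamma(s)\zeta(s)^2$) and decompose the Dirichlet series, absorbing the factors $m^{1-s}$ and $n^s$ via the Hurwitz splitting $q^{-s}\sum_{r=1}^q\zeta(s,r/q)=\zeta(s)$ together with Hurwitz's formula $\zeta(1-s,a)=\tfrac{\Gamma(s)}{(2\pi)^s}\big(e^{-i\pi s/2}\mathrm{Li}_s(e^{2\pi i a})+e^{i\pi s/2}\mathrm{Li}_s(e^{-2\pi i a})\big)$. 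The double poles at $s=0$ and $s=1$ produce the logarithmic term (from differentiating $m(n/m)^s$) and the constants involving $\CC$ (from the Laurent expansions of $\zeta$ at $1$ and $0$, i.e. from $\gamma$ and from $\zeta'(0)=-\tfrac12\log2\pi$), while the remaining contributions are finite geometric sums over the residue classes $k\bmod n$ and $l\bmod m$ which, via $\tfrac1{1-e^{i\theta}}=\tfrac12+\tfrac i2\cot\tfrac\theta2$, collapse to $-\tfrac\pi2\sum_{k=1}^{n-1}\tfrac{mk}n\cot\tfrac{mk\pi}n$ and the symmetric sum obtained by interchanging $m$ and $n$. The coprimality of $m,n$ enters exactly here: it guarantees $mk/n\notin\ZZ$ for $1\le k\le n-1$ (so the cotangents are finite) and it kills the cross terms, leaving only the two ``diagonal'' cotangent sums; assembling and reorganising the pieces produces the right‑hand side in its stated grouping. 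A cleaner alternative is to observe that this Mellin--Barnes integral is, up to elementary normalisations, precisely the cotangent‑sum autocorrelation evaluated in the reciprocity formulas of~\cite{BC13},~\cite{ABB17}, and to quote that evaluation.

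It remains to identify $\CC$. Using $\tfrac1{t(e^t-1)}=\tfrac1{t^2}-\tfrac{g(t)}t$, the defining integral rearranges to $\CC=\int_0^1\tfrac{g(t)-1/2}t\,dt+\int_1^\infty\tfrac{g(t)}t\,dt$; that is, $\CC$ is the regularised value at $s=0$ of $\int_0^\infty t^{s-1}g(t)\,dt=-\Gamma(s)\zeta(s)$, and since $\Gamma(s)\zeta(s)=-\tfrac1{2s}-\tfrac12(\log2\pi-\gamma)+O(s)$ near $s=0$, this equals $\tfrac12(\log2\pi-\gamma)$. The main obstacle is the middle step: because the Mellin--Barnes integrand does not decay in the real direction, its evaluation cannot be carried out by a naive residue summation and instead demands the functional‑equation/Hurwitz‑formula bookkeeping above (or an appeal to the known cotangent reciprocity), and the precise way the $\CC$‑terms and the logarithm separate out from the cotangent sums only becomes visible after that reorganisation. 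The degenerate case $m=n=1$ (both cotangent sums empty) should be checked directly as a special instance of the same computation.
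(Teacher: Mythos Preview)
Your route is genuinely different from the paper's. The paper never touches Mellin transforms or $\zeta$: after substituting $z=e^t$ it expands $\frac{1}{z^n-1}$ in partial fractions over the $n$-th roots of unity, reduces the auxiliary integral $I(m,n)=\int_0^\infty\bigl(\frac{1}{e^{nt}-1}-\frac1n\frac{1}{e^t-1}\bigr)\bigl(\frac{1}{e^{mt}-1}-\frac1m\frac{1}{e^t-1}\bigr)\,dt$ to a finite double sum of explicit rational integrals $\int_1^\infty\frac{dz}{z^2-2\cos(a)z+1}=\frac{\pi-a}{2\sin a}$, and collapses the double sum via the elementary identity $\frac1n\sum_{k=1}^{n-1}\frac{1}{\cos a-\cos(2k\pi/n)}=\frac{1}{\sin a}\bigl(\frac1n\cot\frac a2-\cot\frac{na}{2}\bigr)$; the $m=1$ case is handled by a separate $\varepsilon$-cutoff argument, and $\mathcal{C}$ is identified independently through Binet's integral for $\log\Gamma$. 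Your framework is more structural, explains conceptually why cotangent sums must appear (through Hurwitz's formula), and your identification of $\mathcal{C}$ as the constant term of $-\Gamma(s)\zeta(s)$ at $s=0$ is both correct and cleaner than the paper's argument.

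That said, your proposal has a real gap precisely at its declared ``heart'': you set up the Mellin--Barnes integral $\frac{1}{2\pi i}\int_{(c)} m(n/m)^s\frac{\pi}{\sin\pi s}\zeta(s)\zeta(1-s)\,ds$ correctly and diagnose why naive contour shifting fails, but you do not actually evaluate it. The Hurwitz-splitting step you sketch (``absorbing the factors $m^{1-s}$ and $n^s$ via the Hurwitz splitting \ldots\ together with Hurwitz's formula'') \emph{is} the whole computation, and carrying it through so that the double poles at $s=0,1$ produce exactly $-\tfrac12+\mathcal{C}(m+n)+\tfrac{m-n}{2}\log\tfrac nm$ while the remaining pieces yield precisely the two cotangent sums with the stated coefficients is work of the same order as the paper's elementary proof; none of it is done here. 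Your fallback of quoting \cite{BC13} or \cite{ABB17} is legitimate in principle, but then the proof reduces to checking that their function $g$ (respectively $g_a$) coincides with $\frac{1}{\pi}\bigl(2x\mathcal{A}(x)-2(1+x)\mathcal{C}+(x-1)\log x\bigr)$, which is itself a nontrivial identification you have not carried out. As written, this is a coherent and promising strategy rather than a proof.
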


Theorem~\ref{th:Vasyunin} was obtained when studying a probabilistic version of the Nyman-Beurling criterion for the Riemann hypothesis (RH). One of the main results of the deterministic Nyman-Beurling approach for RH (see~\cite{Nym50,Beu55}), improved by B\'aez-Duarte \textit{et al.} in~\cite{BD03},~\cite{BDBLS00}, is the following:

\begin{theorem}
In $H=L^2(0,\infty)$, set $\chi : t \mapsto 1_{]0,1]}(t)$ and $\rho_n : t \mapsto \left\{\frac{1}{nt}\right\}$, $n \ge 1$.  Then RH holds if and only if
\bean
d_N = d_{H}\left(\chi,{\rm Span}(\rho_1,\ldots, \rho_N)\right) \xrightarrow[N \rightarrow \infty]{} 0.
\eean
\end{theorem}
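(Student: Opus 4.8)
The plan is to pass to the critical line via the Mellin--Plancherel transform, which realizes $H=L^2(0,\infty)$ unitarily (up to the factor $2\pi$) onto $L^2$ of the line $\Re s=\tfrac12$ through $\mathcal{M}f(s)=\int_0^\infty f(t)\,t^{s-1}\,dt$. A direct computation gives $\mathcal{M}\chi(s)=1/s$, and the substitution $u=1/(nt)$ together with the classical identity $\int_0^\infty\{u\}\,u^{-s-1}\,du=-\zeta(s)/s$ (valid for $0<\Re s<1$) gives $\mathcal{M}\rho_n(s)=-\zeta(s)/(s\,n^s)$. Writing $s=\tfrac12+i\tau$ and $D_N(s)=\sum_{n\le N}c_n n^{-s}$ for an arbitrary Dirichlet polynomial, Plancherel turns the approximation problem into
\[
d_N^2=\frac{1}{2\pi}\,\inf_{D_N}\int_{-\infty}^{\infty}\frac{\bigl|\,1+\zeta(s)D_N(s)\,\bigr|^2}{|s|^2}\,d\tau .
\]
Since $d_N$ is non-increasing, $d_N\to0$ is equivalent to $\chi$ lying in the closed span of the $\rho_n$, hence to approximating the constant $1$ by products $\zeta(s)D_N(s)$, i.e.\ to approximating $-1/\zeta(s)$ by Dirichlet polynomials in the weighted space $L^2\bigl(\Re s=\tfrac12,\ |s|^{-2}\,d\tau\bigr)$.

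For the implication RH $\Rightarrow d_N\to0$, I would exhibit explicit approximants built from the Möbius function. Taking $D_N(s)=-\sum_{n\le N}\mu(n)\,\psi(\log n/\log N)\,n^{-s}$ for a smooth cut-off $\psi$ with $\psi(0)=1$, one has $\zeta(s)D_N(s)\to-1$ formally, since $\zeta(s)^{-1}=\sum_n\mu(n)n^{-s}$; the substance is to establish convergence in the weighted $L^2$ norm. This is where RH enters: it furnishes the analytic continuation and polynomial growth of $1/\zeta$ up to the line $\Re s=\tfrac12$ (equivalently, the mollified Mertens bound $\sum_{n\le x}\mu(n)=O(x^{1/2+\ve})$), which control the tail and force the error to $0$.

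For the converse $d_N\to0\Rightarrow$ RH, I would argue by contraposition. Suppose $\zeta(s_0)=0$ for a nontrivial zero with $\beta:=\Re s_0\in(\tfrac12,1)$ and $\Im s_0\neq0$. Because the $\zeta$-factor vanishes at $s_0$, every admissible $g_N(s):=1+\zeta(s)D_N(s)$ satisfies $g_N(s_0)=1$, while $g_N/s=\mathcal{M}\bigl(\chi-\sum_{n\le N}c_n\rho_n\bigr)$ has weighted boundary $L^2$ norm equal to $\sqrt{2\pi}\,d_N$ at the optimal coefficients. The strategy is a complex-analytic rigidity estimate: $g_N/s$ is analytic in $\Re s>\tfrac12$ away from the simple pole that $\zeta$ contributes at $s=1$, and for such functions point evaluation at the interior point $s_0$ should be dominated by the boundary norm. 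After removing the explicit principal part at $s=1$ this yields a bound of the form $1=|g_N(s_0)|\le C(s_0)\,d_N+(\text{pole correction})$, forcing $\liminf_N d_N>0$ and contradicting $d_N\to0$.

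The main obstacle is exactly this last estimate. The difficulty is that the $\mathcal{M}\rho_n$ inherit the pole of $\zeta$ at $s=1$, so $\mathcal{M}\bigl(\chi-\sum c_n\rho_n\bigr)$ does not belong to the Hardy space $H^2(\Re s>\tfrac12)$ and one cannot invoke continuity of point evaluation directly. Making the argument rigorous requires either working in the pole-free strip $\tfrac12<\Re s<1$ with a quantitative Cauchy/reproducing-kernel bound, or subtracting the principal part at $s=1$ and controlling the resulting correction, proportional to $D_N(1)=\sum_{n\le N}c_n$, uniformly in $N$. Securing this uniform control, together with the RH-dependent growth estimates for $1/\zeta$ needed in the forward direction, is the crux of the proof.
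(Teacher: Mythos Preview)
The paper does not prove this theorem at all: it is quoted as a classical result, with references to Nyman, Beurling, and the B\'aez-Duarte strengthening~\cite{Nym50,Beu55,BD03,BDBLS00}. There is therefore no ``paper's own proof'' to compare against; the statement serves purely as background for the autocorrelation computation in Theorem~1.

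That said, your outline is the standard one from the cited literature. The Mellin--Plancherel reduction and the formulas $\mathcal{M}\chi(s)=1/s$, $\mathcal{M}\rho_n(s)=-\zeta(s)/(s\,n^{s})$ are correct, and the mollifier approach with $\mu$ under RH is exactly B\'aez-Duarte's argument for the forward implication. For the converse, your diagnosis of the obstacle is accurate but your proposed remedy is more complicated than what is actually done. The clean observation is that for $t>1$ one has $\rho_n(t)=1/(nt)$, so $\bigl(\chi-\sum_{n\le N}c_n\rho_n\bigr)(t)=-t^{-1}\sum_{n\le N}c_n/n$ on $(1,\infty)$; hence the approximant lies in $L^2(0,1]$ \emph{iff} $D_N(1)=\sum_n c_n/n=0$, and in that case its Mellin transform genuinely belongs to the Hardy space $H^2(\Re s>\tfrac12)$, where point evaluation at an off-line zero $s_0$ is a bounded functional and $g_N(s_0)=1$ immediately gives $\liminf d_N>0$. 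The missing lemma is thus not a delicate reproducing-kernel estimate with a pole correction, but the (short) fact that imposing the linear constraint $D_N(1)=0$ does not enlarge the distance to $\chi$ in the limit; this is how the references dispose of the pole at $s=1$.

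So your plan is on the right track and would succeed once that balancing lemma is inserted, but as written it is an outline with the decisive technical step still open---and in any case it is supplying a proof the paper deliberately delegates to the literature.
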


Here, $\{\cdot \}$ denote the fractional part, and the notation $d_{\mathcal{F}}(f,F)$ stands for the distance between the vector $f$ and the subspace $F$ in the Hilbert space $\mathcal{F}$.\\

The squared distance $d_N^2$ can be expressed as a quotient of Gram determinants: 
\bean
d_N^2 = \frac{\det({\rm Gram}(\chi,\rho_1,\ldots,\rho_N))}{\det({\rm Gram}(\rho_1,\ldots,\rho_N))}.
\eean
The computation of the coefficients of these Gram matrices is related to the study of the autocorrelation function
\bean
\lambda \mapsto A(\lambda) = \int_0^\infty \left\{\frac{1}{t}\right\}\left\{\frac{1}{\lambda t}\right\} dt.
\eean
Indeed, for every $n,m \ge 1$, a simple change of variables gives
\bean
\langle \rho_n , \rho_m \rangle = \int_0^\infty \left\{\frac{1}{nt}\right\}\left\{\frac{1}{mt}\right\} dt = \frac{1}{n} A\left(\frac{m}{n}\right) = \frac{1}{m} A\left(\frac{n}{m}\right).
\eean
\medskip

The autocorrelation function $A(\lambda)$ has been studied in~\cite{BDBLS05}, where the authors prove in particular that $A$ is non-differentiable at each rational point. One of the most useful technical tool for the study of $A$ is the Vasyunin formula \cite{Vas95},~\cite[p.141]{BDBLS00}:
\begin{eqnarray*}
m n \int_0^\infty \left\{\frac{1}{nt}\right\}\left\{\frac{1}{mt}\right\} dt   &=&  \mathcal{C}(n+m) + \frac{m-n}{2}\log\left(\frac{n}{m}\right) \\
&& - \frac{\pi}{2} \sum_{k=1}^{n-1}\left\{\frac{m k}{n}\right\} \cot\left(\frac{k \pi}{n}\right) - \frac{\pi}{2} \sum_{l=1}^{m-1} \left\{\frac{l n}{m}\right\} \cot\left(\frac{l \pi}{m}\right),
\end{eqnarray*}
for the same constant $ \mathcal{C} = \frac 12(\log 2\pi -\gamma) $.

\medskip

The similarity between Theorem~\ref{th:Vasyunin} and Vasyunin formula is striking. In Section~\ref{sec:pNB}, we explain how the left-hand side in Theorem~\ref{th:Vasyunin} can be seen as an "exponentially-averaged" autocorrelation function coming from a probabilistic Nyman-Beurling criterion. More precisely, it can be written $m \mathcal{A}\left(\frac{m}{n}\right)$, or $n \mathcal{A}\left(\frac{n}{m}\right)$, where 
\bean
\mathcal{A}(\lambda) = \int_0^\infty \left(\frac{1}{t}-\frac{1}{e^{t}-1}\right) \left(\frac{1}{\lambda t}-\frac{1}{e^{\lambda t}-1}\right) dt.
\eean

\bigskip

Vasyunin's formula is one of the motivations for the study of cotangent sums, which has been an area of active research these past years, see for instance the introduction of~\cite{BC13}, or the references ~\cite{MR16},~\cite{Bet15}.

\medskip

A remarkable property of cotangent sums has been unveiled by Bettin and Conrey in~\cite{BC13}, who obtained a so-called {\it reciprocity formula}. Following their notations, we set 
\bean 
c(x)=-\sum_{a=1}^{k-1} \frac{a}{k} \cot\left(\frac{\pi a h}{k} \right),
\eean
where $x=h/k$, $k>0$ and ${\rm gcd}(h,k)=1$. The reciprocity formula states that 
\begin{equation*}
    xc(x)+c\left(\frac{1}{x}\right)-\frac{1}{\pi k}=g(x),
\end{equation*} 
for a smooth function $g$ defined from Eisenstein series. What makes the reciprocity formula interesting is the contrast with the fact that the function $c$, defined on the set of rational numbers, cannot be extended into a continuous function on $\RR_+^*$.

\medskip

Theorem~\ref{th:Vasyunin} can be seen as another formulation of the reciprocity formula for cotangent sums. Indeed, with the notations of~\cite{BC13}, Theorem~\ref{th:Vasyunin} can be rewritten as
\begin{equation*}
    xc(x)+c\left(\frac{1}{x}\right)-\frac{1}{\pi k} = \frac{1}{\pi} \left(2 x \mathcal{A}(x)-2(1+x)\cal C+(x-1) \log(x) \right). 
\end{equation*}
Both results combined give a simple representation formula for the function $g$. 

\medskip

Actually, the reciprocity formula is stated and proved in~\cite{BC13} for more general cotangent sums. More precisely, given $a>0$, let us consider the arithmetic sum
\bean
c_a\left(\frac{h}{k}\right) = k^a \sum_{k=1}^{m-1} \cot\left(\frac{\pi m h}{k}\right) \zeta\left(-a,\frac{m}{k}\right),
\eean
where $\zeta(a,x)$ denotes the Hurwitz zeta function. For such sums, Bettin and Conrey proved that the function
\bean 
g_a\left(\frac{h}{k}\right) = c_a\left(\frac{h}{k}\right)-\left(\frac{k}{h}\right)^{1+a} c_a\left(-\frac{k}{h}\right) + \frac{k^a a \zeta(1-a)}{\pi h}
\eean
can be extended from $\mathbb{Q}$ to an analytic function on $\mathbb{C}-\RR_{\leq 0}$.

\medskip

In the article~\cite{ABB17} by Auli, Bayad and Beck, the authors give a more explicit expression for the function $g_a$, written as a contour integral.

\section{An exponentially-averaged autocorrelation function} \label{sec:pNB}

Although they are not needed for the proof of Theorem~\ref{th:Vasyunin}, the results stated in this paragraph explain how the function $\mathcal{A}$ arises from a Nyman-Beurling criterion set in a probabilistic framework, and how the study of reciprocity formulas for cotangent sums is related to RH, see~\cite{DH18}.\\

Let $(\Omega,\mathcal{F},\mathbb{P})$ be a probability space. We consider the Hilbert space $\mathcal{H} = L^2(\Omega \times [0,\infty))$ endowed with the scalar product
\bean
\langle X,Y \rangle_{\mathcal{H}} = \int_0^\infty \int_\Omega X(\omega,t) Y(\omega,t) d\PP(\omega) dt.
\eean
Let $(X_k)_{k \ge 1}$ be an independent sequence of random variables, defined on $(\Omega,\mathcal{F},\mathbb{P})$,  with exponential distribution $X_k \sim \mathcal{E}(1)$. For each $k \ge 1$, the mapping $R_k : (\omega,t) \mapsto \left\{\frac{X_k(\omega)}{k t}\right\}$ belongs to $\mathcal{H}$. We also define $\chi\in \mathcal{H}$ by $\chi(\omega,t) = 1_{[0,1]}(t)$.\\

In~\cite{DH18}, we prove the following:
\begin{theorem}\label{prop:pNB}
If $D_N = d_{\mathcal{H}}\left(\chi,{\rm Vect}(R_1,\ldots, R_N)\right) \xrightarrow[N \rightarrow \infty]{} 0$, then RH holds.
\end{theorem}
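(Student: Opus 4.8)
The plan is to reduce the statement to a deterministic approximation problem in $L^2(0,\infty)$ and then run a Nyman--Beurling--Báez-Duarte type argument; the feature that makes it work is a coefficient constraint automatically supplied by the probabilistic set-up, which is exactly what lets a two-line Poisson estimate close. First I would use the orthogonal splitting $\mathcal{H} = \mathcal{H}_0\oplus\mathcal{H}_0^\perp$, where $\mathcal{H}_0 \cong L^2(0,\infty)$ is the subspace of functions not depending on $\omega$, the projection onto $\mathcal{H}_0$ being the conditional expectation $Ef(t) = \EE[f(\cdot,t)]$. Because the $X_k$ are independent, the centred parts $R_k - ER_k$ are pairwise orthogonal and lie in $\mathcal{H}_0^\perp$; together with $E\chi = \chi$, the elementary evaluation $ER_k(t) = \EE[\{X_k/(kt)\}] = \tfrac1{kt} - \tfrac1{e^{kt}-1} =: r_k(t)$ (a geometric series, using $X_k\sim\mathcal{E}(1)$), and $\|R_k - ER_k\|_{\mathcal{H}}^2 = \int_0^\infty\vari(\{X_k/(kt)\})\,dt = \sigma^2/k$ for a universal constant $\sigma^2\in(0,\infty)$, this yields
\[
D_N^2 = \inf_{c\in\RR^N}\Bigl( \bigl\|\chi - \sum_{k\le N} c_k r_k\bigr\|_{L^2(0,\infty)}^2 + \sigma^2\sum_{k\le N}\frac{|c_k|^2}{k}\Bigr).
\]
Hence $D_N\to 0$ produces Dirichlet polynomials $D_N(s) = \sum_{k\le N}c_k^{(N)}k^{-s}$ with $\sum_k c_k^{(N)}r_k\to\chi$ in $L^2(0,\infty)$ and $\mathrm{m}_N := \sum_k|c_k^{(N)}|^2/k\to 0$ (the mean square of $D_N$ on the critical line).

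Next I would pass to the Mellin transform $\mathcal{M}$, an isometry (up to a constant) from $L^2(0,\infty)$ onto $L^2$ of the line $\Re s = \tfrac12$, with $\mathcal{M}\chi(s) = 1/s$ and $\mathcal{M}r_k(s) = -k^{-s}\Gamma(s)\zeta(s)$ --- the last being the analytic continuation of $\int_0^\infty(\tfrac1u - \tfrac1{e^u-1})u^{s-1}\,du = -\Gamma(s)\zeta(s)$, valid for $0 < \Re s < 1$. Thus $u_N(s) := \Gamma(s)\zeta(s)D_N(s) + 1/s$ tends to $0$ in $L^2(\Re s = \tfrac12)$; it is analytic in $\Re s > 0$ apart from a simple pole at $s = 1$ of residue $D_N(1)$. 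A short tail estimate forces $D_N(1)\to 0$: from $\sum_k c_k^{(N)}r_k(t) = \tfrac{D_N(1)}{t} - \sum_k\tfrac{c_k^{(N)}}{e^{kt}-1}$, the last sum being $O(\mathrm{m}_N^{1/2})$ uniformly on $[1,\infty)$ and square-integrable there with the same bound, a comparison with $\chi\equiv 0$ on $(1,\infty)$ gives $|D_N(1)|^2 = O(D_N^2 + \mathrm{m}_N)\to 0$. I then set $v_N(s) := u_N(s) - \tfrac{D_N(1)}{s-1}$, which is analytic and bounded on the closed strip $\overline{S} = \{\tfrac12\le\Re s\le\tfrac32\}$ and tends to $0$ as $|\Im s|\to\infty$ there.

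Now suppose RH fails; by the functional equation $\zeta$ has a zero $\rho$ with $\Re\rho\in(\tfrac12,1)$, so $\rho\in S$. The Poisson representation for the strip gives $v_N(\rho) = \int v_N(\tfrac12+it)\,\omega_1(t)\,dt + \int v_N(\tfrac32+it)\,\omega_2(t)\,dt$ with positive, exponentially decaying harmonic-measure densities $\omega_1,\omega_2$. On the left edge $\|v_N(\tfrac12+i\cdot)\|_{L^2} \le \|u_N\|_{L^2(\Re s=1/2)} + |D_N(1)|\sqrt{2\pi}\to 0$, so that integral vanishes in the limit; on the right edge $|D_N(\tfrac32+it)| \le \mathrm{m}_N^{1/2}\sqrt{\zeta(2)}$ uniformly and $|\Gamma(\tfrac32+it)\zeta(\tfrac32+it)|$ is bounded, so $v_N(\tfrac32+it)\to\tfrac1{3/2+it}$ uniformly, whence $v_N(\rho)\to\int\tfrac1{3/2+it}\,\omega_2(t)\,dt$. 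On the other hand $v_N(\rho) = \Gamma(\rho)\zeta(\rho)D_N(\rho) + \tfrac1\rho - \tfrac{D_N(1)}{\rho-1} = \tfrac1\rho - \tfrac{D_N(1)}{\rho-1}\to\tfrac1\rho$ because $\zeta(\rho) = 0$. Comparing with the Poisson representation of the bounded analytic function $h(s) = 1/s$ on $\overline{S}$, namely $\tfrac1\rho = \int\tfrac1{1/2+it}\,\omega_1(t)\,dt + \int\tfrac1{3/2+it}\,\omega_2(t)\,dt$, we obtain $\int\tfrac1{1/2+it}\,\omega_1(t)\,dt = 0$, whose real part $\int\tfrac{1/2}{1/4+t^2}\,\omega_1(t)\,dt$ is strictly positive --- a contradiction. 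Hence $\zeta$ has no zero with $\Re s\in(\tfrac12,1)$, i.e.\ RH holds.

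The step I expect to be the crux, and the whole reason this works, is the uniform estimate on the line $\Re s = \tfrac32$: it is available only because the variance term $\sigma^2\sum|c_k|^2/k$ in $D_N^2$ forces the mean-square smallness $\mathrm{m}_N\to 0$ of $D_N$, a hypothesis strictly stronger than the classical one (which controls only the critical line and would leave the right edge of the strip uncontrolled, so that the two-line argument would not close; indeed the $r_k$ already span $L^2(0,\infty)$ unconditionally, so all the content sits in this constraint). The auxiliary technical point is the tail estimate giving $D_N(1)\to 0$, which legitimises removing the pole of $\zeta$ at $s=1$ before invoking the strip representation.
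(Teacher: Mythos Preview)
The paper does not itself prove this statement: it is quoted from the companion preprint~\cite{DH18}, so there is no in-paper proof to compare against. That said, your argument is self-contained and correct. The orthogonal decomposition of $\mathcal{H}$ into deterministic and centred parts, together with the independence of the $X_k$, gives exactly the expression
\[
D_N^2=\inf_{c}\Bigl(\bigl\|\chi-\textstyle\sum_k c_k r_k\bigr\|_{L^2}^2+\sigma^2\sum_k |c_k|^2/k\Bigr),
\]
and from there the Mellin-transform reduction, the removal of the pole at $s=1$ via the tail estimate for $D_N(1)$, and the Poisson/two-line argument on the strip $\tfrac12\le\Re s\le\tfrac32$ all go through as you describe. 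You have correctly identified the one genuinely new ingredient relative to the classical Nyman--Beurling argument: the variance term forces $\sum_k |c_k^{(N)}|^2/k\to 0$, which by Cauchy--Schwarz gives a uniform bound $|D_N(\tfrac32+it)|\le m_N^{1/2}\zeta(2)^{1/2}$ on the right edge, and this is precisely what allows the harmonic-measure representation to close.

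One small caveat on a side remark: your parenthetical claim that the $r_k$ already span $L^2(0,\infty)$ unconditionally is not obvious and, in any case, plays no role in the proof. The Mellin transforms of the $r_k$ are $-k^{-s}\Gamma(s)\zeta(s)$, so questions about the closed span of $\{r_k\}$ are entangled with the zeros of $\zeta$ on the critical line in much the same way as in the deterministic setting; I would either supply a reference or drop the remark.
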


As in the deterministic case, the squared distance $D_N^2$ can be expressed as a quotient of Gram determinants, which is a motivation for the computation of the scalar products $\langle R_n, R_m \rangle_{\mathcal{H}}$, for $m,n \geq 1$. In order to compute these scalar products, we use the following fact: if $Z$ is a random variable with exponential distribution $\mathcal{E}(\alpha)$, $\alpha >0$, then
\bean
\EE\left[\left\{Z\right\}\right] = \frac{1}{\alpha}-\frac{1}{e^{\alpha}-1},
\eean
where $\EE[X]$ is the expectation of the random variable $X$. Such a formula is obtained by straightforward calculations.\\

As $\frac{X_n}{nt}$ (resp. $\frac{X_m}{mt}$) follows the exponential distribution $\mathcal{E}(nt)$ (resp. $\mathcal{E}(mt)$) we obtain, by independence of $R_n$ with $R_m$:
\bean
\langle R_n, R_m \rangle_{\mathcal{H}} = \int_0^\infty \left(\frac{1}{mt}-\frac{1}{e^{mt}-1}\right)\left(\frac{1}{nt}-\frac{1}{e^{nt}-1}\right) dt = \frac{1}{n} \mathcal{A}\left(\frac{m}{n}\right) = \frac{1}{m} \mathcal{A}\left(\frac{n}{m}\right).
\eean

This formula explains the terminology "exponentially-averaged autocorrelation function" for $\mathcal{A}(\lambda)$ and "exponentially-averaged Vasyunin formula" for Theorem~\ref{th:Vasyunin}.\\

We draw the graph of $\lambda\mapsto \mathcal{A}(1/\lambda)$, which has been obtained using Theorem~\ref{th:Vasyunin}.  This graph can be compared with the one of the deterministic autocorrelation function $A$ as seen in~\cite{BDBLS05}. Although there are some global similarities in the behaviour of both functions, the function $\mathcal{A}$ is analytic whereas $A$ is not differentiable (cf Eq.(2) in \cite{BDBLS05}).

\begin{center}
\includegraphics[width=12cm,height=8cm]{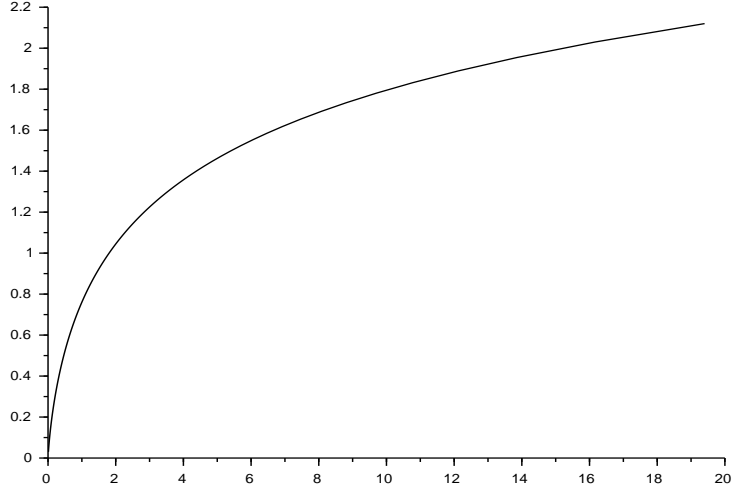} \label{fig:autoco}
\captionof{figure}{The exponentially-averaged autocorrelation function $\lambda\mapsto \mathcal{A}(1/\lambda)$, $\lb>0$.}
\end{center}

\section{Computation of $\mathcal{C}$}

Numerical evidence suggested that $\cal C = \frac{1}{2}(\log(2\pi)-\gamma)$, where $\gamma$ is the Euler constant. Balazard actually proved this identity \cite{Bal18}. We thank Michel Balazard for his proof and for authorizing us to reproduce it below.

\begin{prop}[\cite{Bal18}]\label{bal18}
One has
\[
1-\int_0^1 \left (\frac{1}{t(e^t-1)}-\frac 1{t^2}+\frac 1{2t}\right) dt-\int_1^{\infty} \frac{dt}{t(e^t-1)}=\frac 12(\log 2\pi -\gamma).
\]
\end{prop}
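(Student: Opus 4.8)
The plan is to recognize the constant $\mathcal C$ as a regularized value of a Mellin-type integral and to extract it from the Laurent expansion of a known special function near $s=1$. Concretely, I would start from the integral representation
\[
\int_0^\infty \frac{t^{s-1}}{e^t-1}\,dt = \Gamma(s)\zeta(s), \qquad \Re(s)>1,
\]
and divide by an extra factor of $t$, i.e. consider $F(s) = \int_0^\infty t^{s-2}\big(\tfrac{1}{e^t-1}\big)\,dt = \Gamma(s-1)\zeta(s-1)$, which converges for $\Re(s)>2$. The idea is that the combination of subtracted terms $\tfrac1{t^2}-\tfrac1{2t}$ appearing in Proposition~\ref{bal18} is exactly what renders $\tfrac{1}{t(e^t-1)}$ integrable at $0$ after removing its principal part (recall $\tfrac1{e^t-1} = \tfrac1t - \tfrac12 + \tfrac{t}{12} + O(t^3)$, so $\tfrac1{t(e^t-1)} = \tfrac1{t^2} - \tfrac1{2t} + \tfrac1{12} + O(t^2)$). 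Hence the left-hand side is a renormalized version of ``$\Gamma(s-1)\zeta(s-1)$ at $s=2$'', i.e. of $\Gamma(s)\zeta(s)$ near the pole $s=1$.

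The key steps, in order, would be: (i) split the defining integral at $t=1$ exactly as written and, for $\Re(s)$ large, write $\int_0^\infty t^{s-1}\big(\tfrac1{t(e^t-1)} - \tfrac1{t^2}+\tfrac1{2t}\big)\mathbf 1_{(0,1)}\,dt + \int_1^\infty t^{s-1}\tfrac{1}{t(e^t-1)}\,dt$ minus the elementary integrals $\int_0^1 t^{s-1}(\tfrac1{t^2}-\tfrac1{2t})\,dt = \tfrac1{s-2}-\tfrac1{2(s-1)}$, so that the whole thing analytically continues and equals $\Gamma(s-1)\zeta(s-1) - \tfrac1{s-2} + \tfrac1{2(s-1)} + (\text{entire correction from }t>1)$; (ii) evaluate everything at $s=2$, where $\Gamma(s-1)\zeta(s-1)$ has a simple pole coming from $\zeta(s-1)$, whose Laurent expansion is $\zeta(s-1) = \tfrac1{s-2} + \gamma + O(s-2)$ and $\Gamma(s-1) = 1 - \gamma(s-2) + O((s-2)^2)$; (iii) multiply these expansions, cancel the $\tfrac1{s-2}$ pole against the $-\tfrac1{s-2}$ from step (i), and read off the constant term, which will produce $-\gamma/2$ plus bookkeeping constants; (iv) separately pin down the ``$1$'' and the $\log 2\pi$ contributions — here I expect to invoke $\zeta'(0) = -\tfrac12\log 2\pi$ or equivalently the known value $\int_1^\infty \tfrac{dt}{t(e^t-1)} + (\text{the }t<1\text{ piece})$ reorganized via the Binet/Malmsten integral for $\log\Gamma$, since $\log\Gamma$ has the classical representation $\log\Gamma(x) = \int_0^\infty \big(\tfrac{e^{-xt}-e^{-t}}{1-e^{-t}} + (x-1)e^{-t}\big)\tfrac{dt}{t}$, from which Lerch-type formulas for $\zeta'(0)$ and hence $\log 2\pi$ drop out.

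An alternative, possibly cleaner, route: differentiate the Lerch/Hurwitz identity or use the known formula $\int_0^\infty \big(\tfrac1{e^t-1} - \tfrac1t + \tfrac12\big)\tfrac{e^{-st}}{t}\,dt$-type expression whose value at $s=0$ is essentially $\log\Gamma$-related, giving directly $\int_0^\infty\big(\tfrac1{t(e^t-1)}-\tfrac1{t^2}+\tfrac1{2t}\big)e^{-st}\,dt \to$ something like $\log\Gamma(\cdot)$ evaluated so as to yield $\tfrac12\log 2\pi$ and the $\gamma$ term. In either approach the substance is the same: the integral is a renormalization of $\Gamma(s)\zeta(s)$ at its pole, and the answer is governed by the first two Laurent coefficients of $\zeta$ at $1$ (namely $1$ and $\gamma$) together with the special value $\zeta'(0)$.

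The main obstacle I anticipate is the careful matching of the finite ``leftover'' constants: keeping exact track of the elementary integrals $\int_0^1 t^{s-1}(\tfrac1{t^2}-\tfrac1{2t})\,dt$ as $s\to 2$ (one of whose pieces itself blows up and must cancel the $\zeta$-pole) and ensuring the stray rational constants (the standalone $1$, factors of $\tfrac12$) land correctly, rather than any conceptual difficulty. A secondary subtlety is justifying the analytic continuation / interchange of limit and integral at $s=2$, which is routine once the subtracted integrand is shown to be $O(t^2)$ near $0$ and exponentially small near $\infty$, but should be stated. I would expect the cleanest write-up to isolate a single auxiliary identity — the Laurent expansion of $\Gamma(s)\zeta(s)$ at $s=1$, or equivalently Stieltjes' $\zeta(s)=\tfrac1{s-1}+\gamma+\cdots$ combined with $\zeta'(0)=-\tfrac12\log 2\pi$ — and reduce Proposition~\ref{bal18} to plugging into it.
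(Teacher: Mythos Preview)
Your Mellin strategy is sound in principle, but you have the evaluation point wrong, and this is not just a typo: it propagates into the wrong Laurent data. With $\int_0^\infty t^{s-1}\cdot\frac{1}{t(e^t-1)}\,dt=\Gamma(s-1)\zeta(s-1)$, recovering the integrals in the proposition requires $t^{s-1}=1$, i.e.\ $s=1$, not $s=2$. At $s=2$ your decomposition computes $\int_0^1\bigl(\tfrac{1}{e^t-1}-\tfrac1t+\tfrac12\bigr)\,dt+\int_1^\infty\tfrac{dt}{e^t-1}$, and your own Laurent expansion (where the two $\gamma$'s cancel) shows this equals $\tfrac12$ --- a true but irrelevant identity. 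At the correct point $s=1$ the pole of $\Gamma(s-1)\zeta(s-1)$ comes from $\Gamma$, not from $\zeta$: one needs $\Gamma(s-1)=\tfrac{1}{s-1}-\gamma+O(s-1)$ together with the \emph{Taylor} expansion $\zeta(s-1)=\zeta(0)+\zeta'(0)(s-1)+\cdots=-\tfrac12-\tfrac12(\log 2\pi)(s-1)+\cdots$. The product then has principal part $-\tfrac{1}{2(s-1)}$, which cancels your $+\tfrac{1}{2(s-1)}$ subtraction, and constant term $\tfrac{\gamma}{2}-\tfrac12\log 2\pi$; combined with $\tfrac{1}{s-2}\big|_{s=1}=-1$ this gives $1-\mathcal C=1+\tfrac{\gamma}{2}-\tfrac12\log 2\pi$, as required. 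So $\zeta'(0)=-\tfrac12\log 2\pi$ is not a separate bookkeeping step~(iv) but the heart of the computation, and the Stieltjes expansion of $\zeta$ at $s=1$ plays no role.

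For comparison, the paper follows essentially your ``alternative, cleaner route'': it inserts a regularizer $e^{-xt}$, recognises $\int_0^\infty\bigl(\tfrac{1}{e^t-1}-\tfrac1t+\tfrac12\bigr)e^{-xt}\,\tfrac{dt}{t}$ as Binet's first integral $\log\Gamma(x)-(x-\tfrac12)\log x+x-\tfrac12\log 2\pi$, handles the leftover tail $\int_1^\infty(\tfrac1t-\tfrac12)e^{-xt}\,\tfrac{dt}{t}$ via the exponential-integral formula for $\gamma$, and lets $x\to 0$ using $x\Gamma(x)=\Gamma(x+1)$. The two arguments package the same inputs ($\Gamma'(1)=-\gamma$ and $\zeta'(0)=-\tfrac12\log 2\pi$, the latter hidden inside Binet/Stirling) differently; once your index is corrected, the Mellin route is arguably the more transparent of the two.
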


\begin{proof}(translated from \cite{Bal18})
One has
\begin{multline*}
\int_0^1 \left (\frac{1}{t(e^t-1)}-\frac 1{t^2}+\frac 1{2t}\right) dt+\int_1^{\infty} \frac{dt}{t(e^t-1)}=\\
\lim_{x \rightarrow 0}\left(\int_0^1 \left (\frac{1}{e^t-1}-\frac 1{t}+\frac 1{2}\right) e^{-tx}\frac{dt}t+\int_1^{\infty} \frac{e^{-tx}}{e^t-1}  \frac{dt}t\right).
\end{multline*}

But, for $x>0$,
\bean
I(x) & = & \int_0^1 \left (\frac{1}{e^t-1}-\frac 1{t}+\frac 1{2}\right) e^{-tx}\frac{dt}t+\int_1^{\infty} \frac{e^{-tx}}{e^t-1}  \frac{dt}t\\
    & = & \int_0^{\infty} \left (\frac{1}{e^t-1}-\frac 1{t}+\frac 1{2}\right) e^{-tx}\frac{dt}t +\int_1^{\infty} \left(\frac 1{t}-\frac 1{2}\right)   e^{-tx} \frac{dt}t\cdotp
\eean

On one hand,
\[
\int_0^{\infty} \left (\frac{1}{e^t-1}-\frac 1{t}+\frac 1{2}\right) e^{-tx}\frac{dt}t =\log \Gamma(x)-(x-1/2)\log x +x-\frac 12 \log 2\pi,
\]
(Binet, 1839, cf. \cite{WW27}, \S $\mathbf {12\cdot 31}$, p. 249).

On the other hand, as $x$ tends to $0$,
\begin{equation*}
\int_1^{\infty} \left(\frac 1{t}-\frac 1{2}\right)   e^{-tx} \frac{dt}t =-\frac 12\int_1^{\infty}  e^{-tx} \frac{dt}t+1 +o(1)
\end{equation*}
with
\begin{eqnarray*}
\int_1^{\infty}   e^{-tx} \frac{dt}t &= & \int_x^{\infty}   e^{-t} \frac{dt}t\\
&=&\int_x^1   (e^{-t}-1) \frac{dt}t-\log x+ \int_1^{\infty}   e^{-t} \frac{dt}t\\
&=& -\log x -\gamma +o(1),
\end{eqnarray*}
by virtue of a classical formula for $\gamma$ (cf. \cite{WW27}, \S $\mathbf {12\cdot 2}$, Example 4, p. 243).

Finally,
\begin{eqnarray*}
I(x)&=&\log \Gamma(x)-(x-1/2)\log x +x-\frac 12 \log 2\pi -\frac 12\big(-\log x -\gamma +o(1)\big) +1+o(1)\\
&=&1+\frac 12(\gamma -\log 2\pi)+o(1),
\end{eqnarray*}
where one used $x\Gamma(x) = \Gamma(x+1)$, $x>0$, and $\Gamma(1)=1$. This completes the proof.
\end{proof}

\section{Proof of Theorem~\ref{th:Vasyunin}}

We need several technical lemmas to proceed the proof of Theorem~\ref{th:Vasyunin}.
\begin{lemma}
We have the following expansions
\begin{equation} \label{eq:FR1}
\frac{1}{z^n-1}-\frac{1}{n}\frac{1}{z-1} = \frac{1}{2n} \sum_{k=1}^{n-1} \frac{2 \cos\left(\frac{2k\pi}{n}\right) z-2}{z^2-2\cos\left(\frac{2k\pi}{n}\right)z+1},
\end{equation}
\begin{equation} \label{eq:FR2}
\frac{z^{n-1}}{z^n-1}-\frac{1}{n}\frac{1}{z-1} = \frac{1}{2n} \sum_{k=1}^{n-1} \frac{-2 \cos\left(\frac{2k\pi}{n}\right)+2z}{z^2-2\cos\left(\frac{2k\pi}{n}\right)z+1},
\end{equation}
\begin{equation} \label{eq:FR3}
\frac{z^{n}+1}{z^n-1}-\frac{1}{n}\frac{z+1}{z-1} = \frac{1}{n} \frac{z^2-1}{2z} \sum_{k=1}^{n-1} \frac{1}{\frac{z^2+1}{2z}-\cos\left(\frac{2k\pi}{n}\right)}.
\end{equation}
\end{lemma}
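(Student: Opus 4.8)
The plan is to prove all three identities by the standard method of partial-fraction decomposition of rational functions whose denominators factor over the $n$-th roots of unity. Write $\omega_k = e^{2\pi i k/n}$ for $k=0,1,\dots,n-1$, so that $z^n-1 = \prod_{k=0}^{n-1}(z-\omega_k)$, with a simple zero at each $\omega_k$. The function on the left-hand side of \eqref{eq:FR1} has, by construction, no pole at $z=1=\omega_0$: the residue of $1/(z^n-1)$ at $z=1$ is $1/n$, which exactly cancels the residue of $\frac{1}{n}\frac{1}{z-1}$. Hence the left-hand side is a rational function with simple poles only at the primitive-type roots $\omega_1,\dots,\omega_{n-1}$, and it vanishes at infinity, so it equals $\sum_{k=1}^{n-1}\frac{r_k}{z-\omega_k}$ with $r_k = \operatorname{Res}_{z=\omega_k}\frac{1}{z^n-1} = \frac{1}{n\omega_k^{n-1}} = \frac{\omega_k}{n}$. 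Then I pair the $k$ and $n-k$ terms, using $\omega_{n-k}=\bar\omega_k$, to combine $\frac{\omega_k}{n(z-\omega_k)} + \frac{\bar\omega_k}{n(z-\bar\omega_k)}$ into a single real quadratic term; a short computation gives numerator $\frac{1}{n}\big(2\cos(2k\pi/n)z - 2\big)$ over $z^2 - 2\cos(2k\pi/n)z+1$, which is exactly the summand in \eqref{eq:FR1} after absorbing the factor $\tfrac12$. (When $n$ is even the term $k=n/2$ has $\omega_k=-1$ and is already real; one checks it fits the same formula.)

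For \eqref{eq:FR2} I would proceed identically with $\frac{z^{n-1}}{z^n-1}$ in place of $\frac{1}{z^n-1}$: again the residue at $z=1$ is $\frac{1^{n-1}}{n}=\frac1n$, so the subtraction removes that pole, and the residue at $\omega_k$ is $\frac{\omega_k^{n-1}}{n} = \frac{\bar\omega_k}{n}$. Pairing $k$ with $n-k$ now yields numerator $\frac1n(2z - 2\cos(2k\pi/n))$, matching \eqref{eq:FR2}. Alternatively, and more cheaply, \eqref{eq:FR2} follows from \eqref{eq:FR1} by the substitution $z\mapsto 1/z$ and clearing denominators: $\frac{1}{(1/z)^n-1} = \frac{-z^n}{z^n-1}$ and $\frac{1}{1/z-1} = \frac{-z}{z-1}$, so $\frac{1}{(1/z)^n-1} - \frac1n\frac{1}{1/z-1} = -z\big(\frac{z^{n-1}}{z^n-1} - \frac1n\frac{1}{z-1}\big)$, while the right-hand side of \eqref{eq:FR1} at $1/z$, multiplied by $-z$, transforms into the right-hand side of \eqref{eq:FR2} after simplifying each quadratic factor (multiply numerator and denominator by $z^2$). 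I would present \eqref{eq:FR2} this way to keep the write-up short.

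Finally \eqref{eq:FR3} is obtained by adding \eqref{eq:FR1} and \eqref{eq:FR2}. The left-hand sides add to $\frac{1 + z^{n-1}}{z^n-1} - \frac1n\frac{2}{z-1}$; to reach the stated left-hand side $\frac{z^n+1}{z^n-1}-\frac1n\frac{z+1}{z-1}$ I multiply the sum of \eqref{eq:FR1} and \eqref{eq:FR2} by $z$ and add a correction, or more directly note $\frac{z^n+1}{z^n-1} = \frac{z\cdot z^{n-1}}{z^n-1} + \frac{1}{z^n-1}$ and $\frac{z+1}{z-1} = \frac{z}{z-1}+\frac{1}{z-1}$, so the desired left-hand side equals $z\cdot\big(\text{LHS of }\eqref{eq:FR2}\big) + \big(\text{LHS of }\eqref{eq:FR1}\big)$. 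On the right, $z$ times the summand of \eqref{eq:FR2} plus the summand of \eqref{eq:FR1} gives, over the common denominator $z^2-2\cos(2k\pi/n)z+1$, the numerator $\frac1n\big(2z^2 - 2\cos(2k\pi/n)z + 2\cos(2k\pi/n)z - 2\big) = \frac{2}{n}(z^2-1)$; dividing numerator and denominator by $2z$ turns this into $\frac1n\cdot\frac{z^2-1}{2z}\cdot\frac{1}{\frac{z^2+1}{2z}-\cos(2k\pi/n)}$, which is precisely the summand of \eqref{eq:FR3}.

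I do not anticipate a genuine obstacle here — these are elementary partial-fraction identities — but the one point requiring a little care is the bookkeeping of the pairing $k\leftrightarrow n-k$ and the even-$n$ edge case $\omega_{n/2}=-1$, as well as checking that both sides really do vanish at $z=\infty$ so that the partial-fraction expansion has no polynomial part; these checks are what make the "rational function determined by its poles and residues" argument rigorous.
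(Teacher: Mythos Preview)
Your proposal is correct and follows essentially the same route as the paper: partial fractions of $1/(z^n-1)$ with residues $\omega_k/n$, pairing $k\leftrightarrow n-k$ to obtain \eqref{eq:FR1}; an analogous expansion (the paper phrases it as $P'/P$ with $P(z)=z^n-1$, you compute residues directly or use $z\mapsto 1/z$) for \eqref{eq:FR2}; and then \eqref{eq:FR3} via the identity $\text{LHS}\eqref{eq:FR3}=z\cdot\text{LHS}\eqref{eq:FR2}+\text{LHS}\eqref{eq:FR1}$. Your extra remarks on the even-$n$ case and vanishing at infinity are welcome precision that the paper leaves implicit.
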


\begin{proof} We consider the $n$-th roots of unity $\omega_{k,n} = e^{\frac{2i\pi k}{n}}$ for $k \in \{0,\ldots,n-1\}$. We have
\begin{equation*}
\frac{1}{z^n-1} = \frac{1}{\prod_{k=0}^{n-1} (z-\omega_{k,n})} = \frac{1}{n} \sum_{k=0}^{n-1} \frac{\omega_{k,n}}{z-\omega_{k,n}} = \frac{1}{n} \frac{1}{z-1}+\frac{1}{n} \sum_{k=1}^{n-1} \frac{\omega_{k,n}}{z-\omega_{k,n}}.
\end{equation*}
We sum in both directions to obtain
\begin{equation*}
\frac{1}{z^n-1}-\frac{1}{n}\frac{1}{z-1} = \frac{1}{2n} \sum_{k=1}^{n-1}  \frac{\omega_{k,n}}{z-\omega_{k,n}}+ \frac{\omega_{n-k,n}}{z-\omega_{n-k,n}} = \frac{1}{2n} \sum_{k=1}^{n-1} \frac{2 \cos\left(\frac{2k\pi}{n}\right) z-2}{z^2-2\cos\left(\frac{2k\pi}{n}\right)z+1},
\end{equation*}
which is exactly Equation~\eqref{eq:FR1}. 

\medskip

In order to obtain the second identity, we write
\begin{equation*}
\frac{1}{z-1}+\sum_{k=1}^{n-1} \frac{1}{z-\omega_{k,n}} = \sum_{k=0}^{n-1} \frac{1}{z-\omega_{k,n}} = \frac{P'(z)}{P(z)},
\end{equation*}
where $P(z) = \prod_{k=0}^{n-1} (z-\omega_{k,n}) = z^n-1$. We then have
\begin{equation*}
\frac{nz^{n-1}}{z^n-1}-\frac{1}{z-1}=\frac{1}{2} \sum_{k=1}^{n-1} \frac{1}{z-\omega_{k,n}}+ \frac{1}{z-\omega_{n-k,n}} = \frac{1}{2} \sum_{k=1}^{n-1} \frac{-2 \cos\left(\frac{2k\pi}{n}\right)+2z}{z^2-2\cos\left(\frac{2k\pi}{n}\right)z+1}.
\end{equation*}

In order to obtain~\eqref{eq:FR3}, we notice that
\begin{equation*}
\frac{z^{n}+1}{z^n-1}-\frac{1}{n}\frac{z+1}{z-1} = z \left(\frac{z^{n-1}}{z^n-1}-\frac{1}{n}\frac{1}{z-1}\right) + \left(\frac{1}{z^n-1}-\frac{1}{n}\frac{1}{z-1}\right),
\end{equation*}
and we use Equations~\eqref{eq:FR1} and~\eqref{eq:FR2}.
\end{proof}

\begin{lemma} \label{lem:suminvcos} For any $n \ge 1$ and $a \notin \frac{2\pi}{n} \mathbb{Z}$, we have
\begin{equation*}
\frac{1}{n} \sum_{k=1}^{n-1} \frac{1}{\cos(a)-\cos\left(\frac{2k\pi}{n}\right)} = \frac{1}{\sin(a)} \left( \frac{1}{n} \cot\left(\frac{a}{2}\right) - \cot\left(\frac{n a}{2}\right)\right).
\end{equation*}
\end{lemma}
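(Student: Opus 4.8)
The identity in Lemma~\ref{lem:suminvcos} is a statement about a sum of the form $\sum_{k=1}^{n-1} \frac{1}{c - \cos(2k\pi/n)}$ where $c = \cos(a)$. The natural strategy is to exploit the partial-fraction expansion already recorded in Equation~\eqref{eq:FR3}, since the right-hand side of~\eqref{eq:FR3} contains exactly a sum $\sum_{k=1}^{n-1} \frac{1}{\frac{z^2+1}{2z} - \cos(2k\pi/n)}$. So the idea is to substitute $z = e^{ia}$, which makes $\frac{z^2+1}{2z} = \cos(a)$, and then identify the left-hand side of~\eqref{eq:FR3} in closed form in terms of cotangents.

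**Key steps.** First I would set $z = e^{ia}$ for $a \notin \frac{2\pi}{n}\ZZ$ (so that $z$ is not an $n$-th root of unity and all denominators are nonzero). Then $\frac{z^2+1}{2z} = \cos a$ and $\frac{z^2-1}{2z} = i\sin a$, so the right-hand side of~\eqref{eq:FR3} becomes $\frac{i\sin a}{n}\sum_{k=1}^{n-1}\frac{1}{\cos a - \cos(2k\pi/n)}$. For the left-hand side, I would use the elementary identity $\frac{w+1}{w-1} = \frac{1}{i}\cot(\theta/2)$ when $w = e^{i\theta}$ — indeed $\frac{e^{i\theta}+1}{e^{i\theta}-1} = \frac{e^{i\theta/2}+e^{-i\theta/2}}{e^{i\theta/2}-e^{-i\theta/2}} = \frac{2\cos(\theta/2)}{2i\sin(\theta/2)} = -i\cot(\theta/2)$. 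Applying this with $w = z^n = e^{ina}$ gives $\frac{z^n+1}{z^n-1} = -i\cot(na/2)$, and with $w = z = e^{ia}$ gives $\frac{z+1}{z-1} = -i\cot(a/2)$. Hence the left-hand side of~\eqref{eq:FR3} equals $-i\cot(na/2) + \frac{i}{n}\cot(a/2)$. Equating the two sides and dividing by $i\sin a$ yields
\[
\frac{1}{n}\sum_{k=1}^{n-1}\frac{1}{\cos a - \cos(2k\pi/n)} = \frac{1}{\sin a}\left(\frac{1}{n}\cot\left(\frac{a}{2}\right) - \cot\left(\frac{na}{2}\right)\right),
\]
which is exactly the claimed formula.

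**Main obstacle.** There is no deep obstacle here; the proof is essentially a clean substitution into an already-proven expansion. The only points requiring a little care are purely bookkeeping: (i) checking that the hypothesis $a \notin \frac{2\pi}{n}\ZZ$ guarantees $z = e^{ia}$ avoids all the poles of~\eqref{eq:FR3} (the poles on the unit circle are at the $n$-th roots of unity, i.e. at $a \in \frac{2\pi}{n}\ZZ$, so this is exactly the right condition) and also that $\sin a \neq 0$ and $z \neq 1$ when we divide; (ii) getting the signs right in the two cotangent identities and in the factor $i\sin a$; and (iii) one might alternatively worry that~\eqref{eq:FR3} was proven as an identity of rational functions over $\CC$ and so holds at the complex point $z = e^{ia}$ — this is fine precisely because we have excluded the poles. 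An alternative route, should one prefer to avoid invoking~\eqref{eq:FR3}, would be to start directly from the root-of-unity partial fraction $\frac{1}{z^n-1} = \frac{1}{n}\sum_{k=0}^{n-1}\frac{\omega_{k,n}}{z-\omega_{k,n}}$, pair the $k$ and $n-k$ terms to produce real denominators $z^2 - 2\cos(2k\pi/n)z + 1$, and then specialize $z = e^{ia}$; but reusing~\eqref{eq:FR3} is the shortest path.
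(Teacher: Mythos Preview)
Your proposal is correct and follows essentially the same route as the paper: both evaluate the rational identity~\eqref{eq:FR3} at $z=e^{ia}$, rewrite $\frac{z^2+1}{2z}=\cos a$, $\frac{z^2-1}{2z}=i\sin a$, and $\frac{e^{i\theta}+1}{e^{i\theta}-1}=-i\cot(\theta/2)$, then divide through by $i\sin a$. Your additional remarks on checking that $a\notin\frac{2\pi}{n}\ZZ$ avoids the poles (and that one must have $\sin a\neq 0$ to divide) are a nice touch that the paper leaves implicit.
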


\begin{proof}
We evaluate equation~\eqref{eq:FR3} at $z=e^{i a}$:
\begin{equation*}
\frac{e^{ian}+1}{e^{ian}-1}-\frac{1}{n}\frac{e^{ia}+1}{e^{ia}-1} = \frac{1}{n} \frac{e^{2ia}-1}{2e^{ia}} \sum_{k=1}^{n-1} \frac{1}{\frac{e^{2ia}+1}{2e^{i \alpha}}-\cos\left(\frac{2k\pi}{n}\right)}.
\end{equation*}
The right-hand side reads
\begin{equation*}
\frac{1}{n} \frac{e^{2ia}-1}{2e^{ia}} \sum_{k=1}^{n-1} \frac{1}{\frac{e^{2ia}+1}{2e^{i \alpha}}-\cos\left(\frac{2k\pi}{n}\right)} = \frac{i}{n} \sin(a) \sum_{k=1}^{n-1} \frac{1}{\cos(a)-\cos\left(\frac{2k\pi}{n}\right)},
\end{equation*}
and the left-hand side:
\begin{equation*}
\frac{e^{ian}+1}{e^{ian}-1}-\frac{1}{n}\frac{e^{ia}+1}{e^{ia}-1} = \frac{2 \cos(na/2)}{2 i \sin(na/2)}- \frac{1}{n} \frac{2 \cos(a/2)}{2 i \sin(a/2)} = -i \left(\cot\left(\frac{n a}{2}\right)- \frac{1}{n} \cot\left(\frac{a}{2}\right) \right).
\end{equation*}
We then obtain the desired result. 
\end{proof}

\begin{lemma} \label{lem:ArctanInt}
For any $a \in (0,2\pi), a \neq \pi$,
\begin{equation*}
    \int_1^\infty \frac{1}{z^2-2\cos(a)z+1} dz = \frac{1}{\sin(a)} \left(\frac{\pi}{2}-\frac{a}{2}\right).
\end{equation*}
\end{lemma}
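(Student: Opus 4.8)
The plan is to evaluate the integral $\int_1^\infty \frac{dz}{z^2-2\cos(a)z+1}$ by completing the square in the denominator and recognizing an arctangent. Write $z^2-2\cos(a)z+1 = (z-\cos a)^2 + \sin^2 a$, which is valid and strictly positive on $[1,\infty)$ since $a\in(0,2\pi)$, $a\neq\pi$ forces $\sin a\neq 0$. Then substitute $u = z-\cos a$, so the integral becomes $\int_{1-\cos a}^{\infty} \frac{du}{u^2+\sin^2 a}$.

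Next I would apply the standard primitive $\int \frac{du}{u^2+b^2} = \frac{1}{b}\arctan(u/b)$, taking $b=\sin a$. The subtlety is the sign of $\sin a$: for $a\in(0,\pi)$ we have $\sin a>0$, while for $a\in(\pi,2\pi)$ we have $\sin a<0$, but in either case $\frac{1}{b}\arctan(u/b) = \frac{1}{|\,b\,|}\arctan(u/|\,b\,|)\cdot\mathrm{sgn}(b)/\mathrm{sgn}(b)$ — more cleanly, $\frac{1}{\sin a}\arctan\!\big(\frac{u}{\sin a}\big)$ is an even function of $\sin a$ and hence well-defined, so the formula $\int \frac{du}{u^2+\sin^2 a} = \frac{1}{\sin a}\arctan\!\big(\frac{u}{\sin a}\big)$ holds regardless of sign (one may also simply treat the two cases $a\in(0,\pi)$ and $a\in(\pi,2\pi)$ separately). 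Evaluating between the limits gives
\[
\int_1^\infty \frac{dz}{z^2-2\cos(a)z+1} = \frac{1}{\sin a}\left(\frac{\pi}{2}\,\mathrm{sgn}(\sin a) - \arctan\!\left(\frac{1-\cos a}{\sin a}\right)\right),
\]
using $\arctan(+\infty)=\pi/2$ when $\sin a>0$ and $\arctan(-\infty)=-\pi/2$ when $\sin a<0$, so the leading term is $\frac{1}{\sin a}\cdot\frac{\pi}{2}\,\mathrm{sgn}(\sin a)$, which equals $\frac{\pi}{2|\sin a|}$; combined with the second term this still organizes into $\frac{1}{\sin a}(\frac{\pi}{2}-\frac{a}{2})$ once the half-angle identity is applied, as I explain next.

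The final step is the trigonometric simplification $\arctan\!\big(\frac{1-\cos a}{\sin a}\big) = \frac{a}{2}$ for $a\in(0,\pi)$, which follows from the half-angle formulas $1-\cos a = 2\sin^2(a/2)$ and $\sin a = 2\sin(a/2)\cos(a/2)$, giving $\frac{1-\cos a}{\sin a} = \tan(a/2)$, and $a/2\in(0,\pi/2)$ lies in the principal branch of $\arctan$. For $a\in(\pi,2\pi)$ one gets $\tan(a/2)$ with $a/2\in(\pi/2,\pi)$, so $\arctan(\tan(a/2)) = a/2-\pi$, and the sign of $\sin a$ being negative the full expression again reassembles to $\frac{1}{\sin a}\big(\frac{\pi}{2}-\frac{a}{2}\big)$. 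The only real obstacle is bookkeeping the branch of $\arctan$ and the sign of $\sin a$ across the two subintervals of $(0,2\pi)$; once those are handled consistently the identity drops out. I would carry out the computation cleanly by first assuming $a\in(0,\pi)$, obtaining the stated formula, and then noting that both sides are invariant under $a\mapsto 2\pi-a$ (the left side because $\cos$ is, after the substitution $z\mapsto z$ is unchanged, and the right side because $\frac{1}{\sin a}(\frac{\pi}{2}-\frac{a}{2})$ maps to $\frac{1}{-\sin a}(\frac{\pi}{2}-\pi+\frac a2) = \frac{1}{\sin a}(\frac{\pi}{2}-\frac a2)$), which extends it to all of $(0,2\pi)\setminus\{\pi\}$.
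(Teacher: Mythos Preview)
Your proof is correct and follows essentially the same route as the paper: complete the square in the denominator, reduce to an arctangent primitive, simplify via the half-angle identities $1-\cos a=2\sin^2(a/2)$ and $\sin a=2\sin(a/2)\cos(a/2)$, and then handle the two sign regimes of $\sin a$. The only cosmetic difference is that the paper applies $\arctan(1/x)=\pi/2-\arctan(x)$ to land on $\arctan\!\bigl(\sqrt{(1+\alpha)/(1-\alpha)}\bigr)=\arctan\lvert\cot(a/2)\rvert$ and then does an explicit case split, whereas you arrive at $\arctan(\tan(a/2))$ directly and (in your cleanest version) extend from $(0,\pi)$ to $(\pi,2\pi)$ by the symmetry $a\mapsto 2\pi-a$; both are equivalent bookkeeping.
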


\begin{proof} 
We have, for $|\alpha| < 1$,
\begin{equation*}
\int_1^\infty \frac{1}{z^2-2\alpha z+1} dz = \frac{1}{\sqrt{1-\alpha^2}} \arctan\left(\sqrt{\frac{1+\alpha}{1-\alpha}}\right),
\end{equation*}
which is obtained using the change of variables $x= \frac{z-\alpha}{\sqrt{1-\alpha^2}}$ and $ \pi/2-\arctan(x)= \arctan(1/x)$. If $\alpha = \cos(a)$ for some $a \in (0,2\pi)$, $a \neq \pi$, the above expression simplifies into
\begin{eqnarray*}
\frac{1}{\sqrt{1-\alpha^2}} \arctan\left(\sqrt{\frac{1+\alpha}{1-\alpha}}\right) &=& \frac{1}{|\sin(a)|}\arctan\left(\sqrt{\frac{2 \cos(a/2)^2}{2 \sin(a/2)^2}}\right) \\
&=& \frac{1}{|\sin(a)|} \arctan\left(\left|\frac{1}{\tan(a/2)}\right|\right) \\
&=& \frac{1}{|\sin(a)|} \left| \frac{\pi}{2}-\frac{a}{2}\right| = \frac{1}{\sin(a)} \left(\frac{\pi}{2}-\frac{a}{2}\right),
\end{eqnarray*}
the last identity being obtained by studying separately the cases $0 < a < \pi$ and $\pi < a < 2\pi$. \end{proof}

\medskip

For $m,n \ge 1$, we set 
\begin{equation*}
I(m,n) = \int_0^\infty \left(\frac{1}{e^{nx}-1}-\frac{1}{n}\frac{1}{e^x-1}\right) \left( \frac{1}{e^{mx}-1}-\frac{1}{m}\frac{1}{e^x-1}\right) dx.
\end{equation*}

The change of variables $z=e^x$ gives
\begin{equation*}
I(m,n) = \int_1^\infty \left(\frac{1}{z^n-1}-\frac{1}{n}\frac{1}{z-1}\right) \left( \frac{1}{z^m-1}-\frac{1}{m}\frac{1}{z-1}\right) \frac{dz}{z}.
\end{equation*}

\begin{lemma} \label{prop:ImnCoprime}
Let $m,n \ge 2$ be coprime numbers. Then 
\begin{eqnarray*}
I(m,n) &=& \frac{m-1}{2m} \frac{\log(n)}{n} + \frac{n-1}{2n} \frac{\log(m)}{m} \\
&& + \frac{1}{2 n}\sum_{k=1}^{n-1} \left( \cot\left(\frac{mk\pi}{n}\right)-\frac{1}{m} \cot\left(\frac{k \pi}{n}\right)  \right) \left(\frac{\pi}{2}-\frac{k \pi}{n}\right) \\
&&  +\frac{1}{2m}\sum_{l=1}^{m-1} \left(\cot\left(\frac{nl\pi}{m}\right)- \frac{1}{n} \cot\left(\frac{l \pi}{m}\right) \right) \left(\frac{\pi}{2}-\frac{l \pi}{m}\right).
\end{eqnarray*}
\end{lemma}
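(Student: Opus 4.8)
The plan is to write the integrand as $f(z)=P_n(z)P_m(z)/z$ with $P_n(z):=\frac{1}{z^n-1}-\frac{1}{n}\frac{1}{z-1}$, perform its partial fraction decomposition, and integrate term by term. The role of coprimality is that $P_n$ has a removable singularity at $z=1$ and simple poles exactly at the nontrivial roots of unity $\omega_{k,n}=e^{2i\pi k/n}$ ($1\le k\le n-1$), with residue $\omega_{k,n}/n$, and $P_n(0)=-\frac{n-1}{n}$; since $\gcd(m,n)=1$, no $\omega_{k,n}$ coincides with any $\omega_{l,m}$, so $f$ has only the simple poles $0$, $\omega_{k,n}$, $\omega_{l,m}$, and $f(z)=O(z^{-3})$ as $z\to\infty$. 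Consequently
\begin{equation*}
f(z)=\frac{(n-1)(m-1)}{nm}\,\frac{1}{z}+\frac{1}{n}\sum_{k=1}^{n-1}\frac{P_m(\omega_{k,n})}{z-\omega_{k,n}}+\frac{1}{m}\sum_{l=1}^{m-1}\frac{P_n(\omega_{l,m})}{z-\omega_{l,m}},
\end{equation*}
with no polynomial part, and the sum of all residues (the coefficient of $z^{-1}$ at infinity) vanishes.

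Next I would integrate over $(1,R)$ and let $R\to\infty$. Using $\int_1^R dz/z=\log R$ and $\int_1^R dz/(z-\omega)=\log(R-\omega)-\log(1-\omega)$ — the principal logarithm being the correct antiderivative, since $z-\omega$ stays in the open right half-plane for $z\ge 1$ and $\omega$ a root of unity $\neq 1$ — the coefficients of $\log R$ add up to the vanishing total residue, the divergent parts cancel, and
\begin{equation*}
I(m,n)=-\frac{1}{n}\sum_{k=1}^{n-1}P_m(\omega_{k,n})\log(1-\omega_{k,n})-\frac{1}{m}\sum_{l=1}^{m-1}P_n(\omega_{l,m})\log(1-\omega_{l,m}).
\end{equation*}

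It then remains to make these sums explicit. From $\frac{1}{e^{i\theta}-1}=-\frac{1}{2}-\frac{i}{2}\cot\frac{\theta}{2}$ — valid at $\theta=2\pi k/n$ and at $\theta=2\pi km/n$, the latter being $\notin 2\pi\ZZ$ precisely because $\gcd(m,n)=1$ — one obtains $P_m(\omega_{k,n})=-\frac{m-1}{2m}-\frac{i}{2}\left(\cot\frac{mk\pi}{n}-\frac{1}{m}\cot\frac{k\pi}{n}\right)$; and $\log(1-e^{i\theta})=\log\!\left(2\sin\frac{\theta}{2}\right)-i\left(\frac{\pi}{2}-\frac{\theta}{2}\right)$ for $\theta\in(0,2\pi)$ gives $\log(1-\omega_{k,n})=\log\!\left(2\sin\frac{k\pi}{n}\right)-i\left(\frac{\pi}{2}-\frac{k\pi}{n}\right)$, and symmetrically with $m$ and $n$ exchanged. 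Since $I(m,n)$ is real and each of the two sums equals its own complex conjugate (conjugation merely reindexes $k\mapsto n-k$), I take real parts: the product of the two imaginary parts reproduces the cotangent sum $\frac{1}{2n}\sum_{k=1}^{n-1}\left(\cot\frac{mk\pi}{n}-\frac{1}{m}\cot\frac{k\pi}{n}\right)\left(\frac{\pi}{2}-\frac{k\pi}{n}\right)$ (and its $m\leftrightarrow n$ analogue), while the real part $-\frac{m-1}{2m}$ of $P_m(\omega_{k,n})$ pairs with $\log(2\sin\frac{k\pi}{n})$ and, via the classical identity $\prod_{k=1}^{n-1}(1-\omega_{k,n})=n$, i.e. $\sum_{k=1}^{n-1}\log(2\sin\frac{k\pi}{n})=\log n$, yields $\frac{m-1}{2m}\frac{\log n}{n}$ (and symmetrically $\frac{n-1}{2n}\frac{\log m}{m}$). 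Collecting these contributions gives the stated formula.

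The main obstacle is the bookkeeping in the second step: the partial fraction pieces are only conditionally integrable on $(1,\infty)$, so one must regularize at a finite cutoff, verify that the $\log R$ terms cancel (equivalently, that $f$ has no residue at infinity and no polynomial part, both from the $z^{-3}$ decay), and check that the principal branch of $\log$ is legitimate along the whole ray. Everything else is elementary once coprimality is invoked to keep the poles distinct and the relevant cotangents well defined. Alternatively, one can avoid complex residues: expand $P_n$ and $P_m$ through \eqref{eq:FR1}, partial-fraction the resulting rational function of $z$, integrate each piece with Lemma~\ref{lem:ArctanInt} and elementary logarithmic integrals, and sum over $l$ and $k$ using Lemma~\ref{lem:suminvcos} together with the product identity above; this route is longer but relies only on the preceding lemmas.
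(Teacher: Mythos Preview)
Your argument is correct, and it is genuinely different from the paper's proof. The paper expands each factor through \eqref{eq:FR1} into real quadratic pieces, obtaining a double sum of integrals $\int_1^\infty F_{k,l}(z)\,dz$; it then partial-fractions each $F_{k,l}$ over $\RR$, resums the logarithmic derivatives via \eqref{eq:FR2} to produce the $\log n$, $\log m$ terms, and handles the remaining pieces by first summing over one index with Lemma~\ref{lem:suminvcos} and then integrating with Lemma~\ref{lem:ArctanInt}. You instead perform the global partial fraction decomposition of $P_n(z)P_m(z)/z$ over $\mathbb{C}$ in one step, using that the residues of $P_n$ at the nontrivial $n$-th roots of unity are $\omega_{k,n}/n$; the integration then reduces to evaluating $\log(1-\omega_{k,n})$, whose real and imaginary parts immediately give the $\log n$ term (via $\prod_{k=1}^{n-1}(1-\omega_{k,n})=n$) and the factor $\frac{\pi}{2}-\frac{k\pi}{n}$, while $P_m(\omega_{k,n})$ already packages the cotangent combination $\cot\frac{mk\pi}{n}-\frac{1}{m}\cot\frac{k\pi}{n}$. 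Your route is shorter and bypasses Lemmas~\ref{lem:suminvcos} and~\ref{lem:ArctanInt} entirely; the paper's route stays over the reals throughout and relies on those lemmas to collapse the double sum. The alternative you sketch at the end is precisely what the paper does.
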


\begin{proof} 
We deduce from equation~\eqref{eq:FR1} that
\begin{equation*}
I(m,n) = \frac{1}{nm} \sum_{k=1}^{n-1} \sum_{l=1}^{m-1} \int_1^{\infty} \frac{\cos\left(\frac{2k\pi}{n}\right) z-1}{z^2-2\cos\left(\frac{2k\pi}{n}\right)z+1} \frac{\cos\left(\frac{2l\pi}{m}\right) z-1}{z^2-2\cos\left(\frac{2l\pi}{m}\right)z+1} \frac{1}{z} dz.
\end{equation*}
For $k \in \{1,\ldots,n-1\}$ and $l \in \{1,\ldots,m-1\}$, we set $\alpha_k = \cos\left(\frac{2k\pi}{n}\right)$, $\beta_l = \cos\left(\frac{2l\pi}{m}\right)$, and
\begin{equation*}
F_{k,l}(z) = \frac{\alpha_k z-1}{z^2-2\alpha_k z+1} \frac{\beta_l z-1}{z^2-2\beta_l z+1} \frac{1}{z}.
\end{equation*}
As $m$ and $n$ are coprime numbers, we have $\alpha_k \neq \beta_l$. Therefore,
\begin{eqnarray*}
F_{k,l}(z) &=& \frac{1}{z} + \frac{\frac{-1}{2}z+\frac{1-2\alpha_k^2 + \alpha_k \beta_l}{2 (\beta_l-\alpha_k)}}{z^2-2\alpha_k z +1} + \frac{\frac{-1}{2}z-\frac{1-2\beta_l^2 + \alpha_k \beta_l}{2 (\beta_l-\alpha_k)}}{z^2-2\beta_l z +1} \\
&=& \frac{1}{z}-\frac{1}{4} \frac{2z-2\alpha_k}{z^2-2\alpha_k z+1}+\frac{1}{2} \frac{1-\alpha_k^2}{\beta_l-\alpha_k} \frac{1}{z^2-2\alpha_k z+1}\\
& & -\frac{1}{4} \frac{2z-2\beta_l}{z^2-2\beta_l z+1}+\frac{1}{2} \frac{1-\beta_l^2}{\alpha_k-\beta_l} \frac{1}{z^2-2\beta_l z+1}.
\end{eqnarray*}
We use equation~\eqref{eq:FR2} to write
\begin{eqnarray*}
\sum_{k=1}^{n-1} \sum_{l=1}^{m-1} \frac{1}{z}-\frac{1}{4} \frac{2z-2\alpha_k}{z^2-2\alpha_k z+1}-\frac{1}{4} \frac{2z-2\beta_l}{z^2-2\beta_l z+1} &=& \frac{(m-1)(n-1)}{z} - \frac{(m-1)n}{2} \left(\frac{z^{n-1}}{z^n-1}-\frac{1}{n}\frac{1}{z-1}\right) \\&& - \frac{(n-1)m}{2} \left(\frac{z^{m-1}}{z^m-1}-\frac{1}{m}\frac{1}{z-1}\right)\\
&=& \frac{1}{2} \frac{\partial}{\partial z} \log\left(\frac{z^{2(m-1)(n-1)}(z-1)^{m-1+n-1}}{(z^n-1)^{m-1} (z^m-1)^{n-1}}\right).
\end{eqnarray*}
From the limits
\bean
    \lim_{z \rightarrow \infty} \log\left(\frac{z^{2(m-1)(n-1)}(z-1)^{m-1+n-1}}{(z^n-1)^{m-1} (z^m-1)^{n-1}}\right) & = & 0, \\
    \lim_{z \rightarrow 1} \log\left(\frac{z^{2(m-1)(n-1)}(z-1)^{m-1+n-1}}{(z^n-1)^{m-1} (z^m-1)^{n-1}}\right) & = &  \log\left(\frac{1}{n^{m-1} m^{n-1}} \right),
\eean
we deduce
\begin{equation}\label{eq:Imn1}
\frac{1}{mn} \sum_{k=1}^{n-1} \sum_{l=1}^{m-1} \int_1^\infty \frac{1}{z}-\frac{1}{4} \frac{2z-2\alpha_k}{z^2-2\alpha_k z+1}-\frac{1}{4} \frac{2z-2\beta_l}{z^2-2\beta_l z+1} dz = \frac{m-1}{2m} \frac{\log(n)}{n} + \frac{n-1}{2n} \frac{\log(m)}{m}.
\end{equation}

In order to compute the integral of the remaining terms, we first use Lemma~\ref{lem:suminvcos} to write
\begin{eqnarray*}
\frac{1}{mn} \sum_{k=1}^{n-1} \sum_{l=1}^{m-1} \frac{1-\alpha_k^2}{\beta_l-\alpha_k} \frac{1}{z^2-2\alpha_k z+1} &=& - \frac{1}{n}\sum_{k=1}^{n-1}\left( \frac{1}{m} \sum_{l=1}^{m-1} \frac{1}{\cos\left(\frac{2 k \pi}{n}\right)-\cos\left(\frac{2 l \pi}{m}\right)} \right)\frac{\sin^2\left(\frac{2 k \pi}{n}\right)}{z^2-2\cos\left(\frac{2 k \pi}{n}\right) z+1} \\
&=& - \frac{1}{n}\sum_{k=1}^{n-1}\frac{\frac{1}{m} \cot\left(\frac{k \pi}{n}\right) - \cot\left(\frac{mk\pi}{n}\right)}{\sin\left(\frac{2 k \pi}{n}\right)} \frac{\sin^2\left(\frac{2 k \pi}{n}\right)}{z^2-2\cos\left(\frac{2 k \pi}{n}\right) z+1} \\
&=& - \frac{1}{n}\sum_{k=1}^{n-1} \left( \frac{1}{m} \cot\left(\frac{k \pi}{n}\right) - \cot\left(\frac{mk\pi}{n}\right)\right) \frac{\sin\left(\frac{2 k \pi}{n}\right)}{z^2-2\cos\left(\frac{2 k \pi}{n}\right) z+1}.
\end{eqnarray*}
Lemma~\ref{lem:ArctanInt} yields
\begin{equation*}
    \int_1^\infty \frac{\sin\left(\frac{2 k \pi}{n}\right)}{z^2-2\cos\left(\frac{2 k \pi}{n}\right) z+1} dz = \left(\frac{\pi}{2}-\frac{k \pi}{n}\right).
\end{equation*}

\medskip

We thus have shown that
\begin{equation}\label{eq:Imn2}
\int_1^\infty \frac{1}{mn} \sum_{k=1}^{n-1} \sum_{l=1}^{m-1} \frac{1-\alpha_k^2}{\beta_l-\alpha_k} \frac{1}{z^2-2\alpha_k z+1} dz = - \frac{1}{n}\sum_{k=1}^{n-1} \left( \frac{1}{m} \cot\left(\frac{k \pi}{n}\right) - \cot\left(\frac{mk\pi}{n}\right)\right) \left(\frac{\pi}{2}-\frac{k \pi}{n}\right).
\end{equation}
Similarly,
\begin{equation} \label{eq:Imn3}
\int_1^\infty \frac{1}{mn} \sum_{k=1}^{n-1} \sum_{l=1}^{m-1} \frac{1-\beta_l^2}{\alpha_k-\beta_l} \frac{1}{z^2-2\beta_l z+1} dz = - \frac{1}{m}\sum_{l=1}^{m-1} \left( \frac{1}{n} \cot\left(\frac{l \pi}{m}\right) - \cot\left(\frac{nk\pi}{m}\right)\right) \left(\frac{\pi}{2}-\frac{l \pi}{m}\right).
\end{equation}
Lemma~\ref{prop:ImnCoprime} is then proven by summing the identities~\eqref{eq:Imn1},~\eqref{eq:Imn2} and~\eqref{eq:Imn3}.
\end{proof}

\begin{lemma} \label{prop:I1n}
Let $n \ge 1$. There exists $\cal C>0$, which does not depend on $n$, such that
\begin{equation*}
\int_0^\infty \left(\frac{1}{t}-\frac{1}{e^{t}-1}\right)\left(\frac{1}{nt}-\frac{1}{e^{nt}-1}\right) dt = \cal C\left(1+\frac{1}{n}\right) -\frac{1}{2n} - \frac{n-1}{2n} \log(n) + \frac{1}{2n} \sum_{k=1}^{n-1} \cot\left(\frac{k \pi}{n}\right)\left( \frac{\pi}{2}-\frac{k \pi}{n}\right).
\end{equation*}
\end{lemma}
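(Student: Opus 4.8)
Throughout write $f_m(t)=\frac1{mt}-\frac1{e^{mt}-1}$ and $\psi(t)=\frac1{e^t-1}-\frac1t+\frac12$, so $f_1=\frac12-\psi$. By \eqref{eq:FR1} (applied with $z=e^t$) we have $f_n=\frac1n f_1-h_n$, where $h_n(t)=\frac1{e^{nt}-1}-\frac1n\frac1{e^t-1}$, and therefore
\begin{equation*}
\int_0^\infty f_1 f_n\,dt=\frac1n\int_0^\infty f_1^2\,dt-\frac12\int_0^\infty h_n\,dt+\int_0^\infty \psi(t)h_n(t)\,dt .
\end{equation*}
The plan is to compute these three integrals. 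The second one is elementary: substituting $z=e^t$, expanding by \eqref{eq:FR1}, integrating the resulting rational functions term by term, and using $\prod_{k=1}^{n-1}2\sin\frac{k\pi}n=n$ gives $\int_0^\infty h_n\,dt=-\frac{\log n}n$. The first one, $\int_0^\infty f_1^2\,dt$, is evaluated exactly as the constant in the Vasyunin formula: differentiating the Binet formula quoted in the proof of Proposition~\ref{bal18} gives the Laplace transform of $f_1$ in terms of $\log\Gamma$, and Stirling's asymptotics for the relevant partial sums then yield $\int_0^\infty f_1^2\,dt=2\mathcal C-\frac12$ for a universal constant $\mathcal C>0$; only the existence of such a $\mathcal C$ is asserted, so there is no need to evaluate it here.

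The main work is $\int_0^\infty\psi(t)h_n(t)\,dt$. Expanding $h_n$ by \eqref{eq:FR1} and writing $a_k:=\frac{2k\pi}n$, it equals $\frac1{2n}\sum_{k=1}^{n-1}\int_0^\infty\psi(t)\,\frac{2\cos a_k\,e^t-2}{e^{2t}-2\cos a_k\,e^t+1}\,dt$. In each summand one restores $\psi(t)=\frac1{e^t-1}-\frac1t+\frac12$ and substitutes $z=e^t$; the integrand then splits into a purely rational part on $[1,\infty)$, handled by partial fractions together with Lemma~\ref{lem:ArctanInt}, and a remaining part carrying a factor $\frac1{\log z}$, handled by the Binet integral for $\log\Gamma$. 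The rational part is where the cotangent terms come from: since $\int_1^\infty\frac{\sin a_k}{z^2-2\cos a_k z+1}\,dz=\frac\pi2-\frac{k\pi}n$ by Lemma~\ref{lem:ArctanInt} and $\cot\frac{a_k}2\,\sin a_k=1+\cos a_k$, one has $\cot\frac{k\pi}n\bigl(\frac\pi2-\frac{k\pi}n\bigr)=\int_1^\infty\frac{1+\cos a_k}{z^2-2\cos a_k z+1}\,dz$. Summing over $k$, recombining with the two terms above (the logarithmic contributions combining as $\frac{\log n}{2n}-\frac{\log n}2=-\frac{n-1}{2n}\log n$), and checking that the constants assemble into $\mathcal C(1+\frac1n)$, one obtains the stated identity.

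The real difficulty is the $\frac1{\log z}$-part: $\int_0^\infty f_1 f_n\,dt$ has an irreducibly transcendental content — its cotangent sum, like those of~\cite{BC13}, admits no elementary closed form — so it cannot be reached from Lemmas~\ref{lem:suminvcos}–\ref{lem:ArctanInt} alone, and Binet's formula for $\log\Gamma$ (equivalently, the Stirling asymptotics it encodes) is an indispensable external ingredient. A secondary, purely technical, obstacle is that after the splittings several of the sub-integrals diverge at $t=0$ (respectively at $z=1$, because of the $\frac1t$ in $f_1$ and the $\frac1{z-1}$, $\frac1{\log z}$ singularities of the corresponding integrand) and converge only in combination, so the decompositions must be carried out with the same regularization used in the proof of Proposition~\ref{bal18}.
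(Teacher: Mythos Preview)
Your decomposition $f_n=\tfrac1n f_1-h_n$, $f_1=\tfrac12-\psi$, and the split
\[
\int_0^\infty f_1f_n=\frac1n\int_0^\infty f_1^2-\frac12\int_0^\infty h_n+\int_0^\infty\psi\,h_n
\]
is valid, and your computation $\int_0^\infty h_n=-\tfrac{\log n}{n}$ is correct. But the remainder of the argument has two genuine gaps.

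First, the claim that Binet's formula is \emph{indispensable} is false, and this matters because it is precisely where you wave your hands. The paper's proof does not touch $\log\Gamma$ at all: it expands the product $f_1f_n$ directly into the four pieces $\tfrac1{nt^2}$, $-\tfrac1{t(e^{nt}-1)}$, $-\tfrac1{nt(e^t-1)}$, $\tfrac1{(e^t-1)(e^{nt}-1)}$, integrates each over $[\varepsilon,\infty)$, and lets $\varepsilon\to0$. The first three are handled by the asymptotic $\int_\varepsilon^\infty\tfrac{dt}{t(e^t-1)}=\tfrac1\varepsilon+\tfrac12\log\varepsilon-\mathcal C+o(1)$, which is just the \emph{definition} of~$\mathcal C$; the fourth becomes, after $x=e^t$, a purely rational integral over $[c,\infty)$, evaluated by partial fractions, \eqref{eq:FR2}, and Lemma~\ref{lem:ArctanInt}. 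No $\tfrac1{\log z}$ factor ever appears. So the integral \emph{is} reached from Lemmas~\ref{lem:suminvcos}--\ref{lem:ArctanInt} together with the integral definition of $\mathcal C$.

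Second, and relatedly, your sketch does not account for the full $\mathcal C(1+\tfrac1n)$. The term $\tfrac1n\int f_1^2=\tfrac{2\mathcal C}{n}-\tfrac1{2n}$ supplies only $\tfrac{2\mathcal C}{n}$; the remaining $\mathcal C\tfrac{n-1}{n}$ must come out of $\int_0^\infty\psi\,h_n$, and you give no mechanism for this beyond ``checking that the constants assemble''. In your splitting of $\psi$ the piece $-\tfrac1t\cdot h_n$ has a non-integrable singularity at $t=0$ (each summand in \eqref{eq:FR1} tends to $-1$ there), so the regularization you allude to is not optional bookkeeping but the actual source of the missing $\mathcal C\tfrac{n-1}{n}$; this needs to be carried out, not asserted. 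The paper's four-term expansion sidesteps the issue because the $\tfrac1t$ factors pair directly with $\tfrac1{e^{nt}-1}$ and $\tfrac1{e^t-1}$, producing exactly the integrals that define $\mathcal C$.
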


\begin{proof} It suffices to compute the integrals
\begin{equation*}
I_1 = \int_\ve^{\infty} \frac{dt}{nt^2} \ , \ I_2 = -\int_\ve^{\infty} \frac{dt}{t (e^{nt}-1)} \ , \ I_3 = -\int_\ve^{\infty} \frac{dt}{nt(e^t-1)} \ , \ I_4 = \int_\ve^{\infty} \frac{1}{e^{t}-1}\frac{1}{e^{nt}-1} dt,
\end{equation*}
to sum them up and let $\ve \rightarrow 0$.

\medskip

We have $I_1 = \frac{1}{n \ve}$. In order to compute $I_2$ and $I_3$, we notice that the function
\begin{equation*}
t \longmapsto \frac{1}{t(e^t-1)} - \frac{1}{t^2}+\frac{1}{2t}
\end{equation*}
can be continuously extended at $t=0$, so we have
\begin{equation*}
\int_\ve^\infty\frac{dt}{t(e^t-1)} = \frac{1}{\ve}+\frac{1}{2}\log(\ve) - \cal C + o(\ve).
\end{equation*}
Recall that 
\begin{equation*} 
\cal C = 1-\int_0^1 \left(\frac{1}{t(e^t-1)} - \frac{1}{t^2}+\frac{1}{2t} \right)dt - \int_1^\infty \frac{dt}{t(e^t-1)}.
\end{equation*}
Hence,
\begin{equation*}
I_2 = -\frac{1}{n\ve}-\frac{1}{2} \log(n\ve) + \cal C + o(\ve) , 
\end{equation*}
\begin{equation*}
I_3 = -\frac{1}{n\ve}-\frac{1}{2n} \log(\ve) + \frac{\cal C}{n} + o(\ve). 
\end{equation*}

It remains to compute $I_4$. The change of variables $x=e^t$ gives, with $c = \exp(\ve)$,
\begin{equation*}
I_4 = \int_c^\infty \frac{1}{x} \frac{1}{x-1} \frac{1}{x^n-1} dx.
\end{equation*}
Still using $\omega_{k,n} = e^{\frac{2i \pi k}{n}}$ and $\alpha_k = \cos\left(\frac{2 k \pi}{n}\right)$, we perform the partial fraction expansion of $\frac{1}{x^n-1}$:
\begin{equation*}
\frac{n}{x^n-1} = \sum_{k=0}^{n-1}\frac{\omega_{k,n}}{x-\omega_{k,n}}
=\frac{1}{x-1}+ \sum_{k=1}^{n-1} \frac{x \alpha_k-1}{x^2-2 \alpha_k x+1}.
\end{equation*}
We compute
\begin{eqnarray*}
\int_c^\infty \frac{dx}{x(x-1)^2} &=& \int_c^\infty \left(\frac{1}{x}-\frac{1}{x-1}+\frac{1}{(x-1)^2} \right) dx \\
&=& \left[\log\left(\frac{x}{x-1}\right)\right]_c^{\infty}+\left[-\frac{1}{x-1}\right]_{c}^{\infty} \\
&=& \log\left(\frac{c-1}{c}\right)+\frac{1}{c-1} \\
&=& \frac{1}{\ve} + \log(\ve) -\frac{1}{2} + o_{\ve \rightarrow 0}(1).
\end{eqnarray*}

For $1 \le k \le n-1$, we have
\begin{equation*}
\frac{1}{x} \frac{1}{x-1}  \frac{x \alpha_k-1}{x^2-2 \alpha_k x+1} = \frac{1}{x}-\frac{1}{2}\frac{1}{x-1}-\frac{1}{4}\frac{2x-2 \alpha_k }{x^2-2 \alpha_k x+1}+\frac{1}{2}\frac{\alpha_k +1}{x^2-2 \alpha_k x+1}.
\end{equation*}
We first focus on the first three terms. Equation~(\ref{eq:FR2}) allows us to write
\begin{eqnarray*}
\sum_{k=1}^{n-1} \frac{1}{x}-\frac{1}{2}\frac{1}{x-1}-\frac{1}{4}\frac{2x-2 \alpha_k }{x^2-2 \alpha_k x+1} &=& \frac{n-1}{x} - \frac{n-1}{2(x-1)}-\frac{1}{2} \left(\frac{nx^{n-1}}{x^n-1}-\frac{1}{x-1}\right)\\
&=& \frac{n-1}{x} - \frac{n-2}{2(x-1)}-\frac{1}{2} \frac{nx^{n-1}}{x^n-1},
\end{eqnarray*}
and so
\begin{eqnarray*}
\frac{1}{n}\int_c^{\infty }\sum_{k=1}^{n-1} \frac{1}{x}-\frac{1}{2}\frac{1}{x-1}-\frac{1}{4}\frac{2x-2\alpha_k}{x^2-2\alpha_k x+1} dx &=& \frac{1}{2n} \left[\log\left(\frac{z^{2(n-1)}}{(z-1)^{n-2}(z^n-1)}\right)\right]_c^{\infty} \\
&=& \frac{n-2}{2n}\log(c-1)+\frac{1}{2n} \log(c^n-1)-2\frac{n-1}{2n}\log(c) \\
&=& \frac{n-1}{2n} \log(\ve)+\frac{\log(n)}{2n}+o(1).
\end{eqnarray*}
We also compute, with Lemma~\ref{lem:ArctanInt},
\begin{eqnarray*}
\frac{1}{n}\int_c^{\infty }\sum_{k=1}^{n-1} \frac{1}{2}\frac{\alpha_k +1}{x^2-2\alpha_k x+1} dx &=& \frac{1}{2n} \sum_{k=1}^{n-1}\left(\cos\left(\frac{2k\pi}{n}\right)+1\right) \int_c^\infty \frac{dx}{x^2-2\cos\left(\frac{2k\pi}{n}\right) x+1} \\
&=& \frac{1}{2n} \sum_{k=1}^{n-1} \frac{\cos\left(\frac{2k\pi}{n}\right)+1}{\sin\left(\frac{2k\pi}{n}\right)}\left( \frac{\pi}{2}-\frac{k \pi}{n}\right) \\
&=& \frac{1}{2n} \sum_{k=1}^{n-1} \cot\left(\frac{k \pi}{n}\right)\left( \frac{\pi}{2}-\frac{k \pi}{n}\right). \label{eq:In2}
\end{eqnarray*}

\end{proof}

The statement of Lemma~\ref{prop:ImnCoprime} and Lemma~\ref{prop:I1n} can be simplified by using the following
\begin{lemma} \label{lem:cotanzero}
If $m,n \ge 1$ are coprime, then $\d \sum_{k=1}^{n-1} \cot\left(\frac{m k \pi}{n}\right)=0$.
\end{lemma}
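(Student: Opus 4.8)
The plan is to exploit the symmetry $k \mapsto n-k$ of the index set $\{1,\dots,n-1\}$. First I would check that every term of the sum is well defined: since $m$ and $n$ are coprime, for $1 \le k \le n-1$ we have $n \nmid mk$, hence $\frac{mk\pi}{n} \notin \pi\ZZ$, so $\cot\!\left(\frac{mk\pi}{n}\right)$ makes sense. (This is the only place coprimality is genuinely used; without it some terms would be undefined.)

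Next, using that the cotangent is $\pi$-periodic and odd, together with $m\pi \in \pi\ZZ$, I would note that for each $k$
\[
\cot\!\left(\frac{m(n-k)\pi}{n}\right) = \cot\!\left(m\pi - \frac{mk\pi}{n}\right) = \cot\!\left(-\frac{mk\pi}{n}\right) = -\cot\!\left(\frac{mk\pi}{n}\right).
\]
Summing this identity over $k=1,\dots,n-1$ and re-indexing the left-hand side by $k \mapsto n-k$ yields $S = -S$, where $S$ denotes the sum in the statement; therefore $S=0$.

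The only point requiring a moment's care is that when $n$ is even the involution $k \mapsto n-k$ fixes $k=n/2$; but then coprimality forces $m$ odd, so $\frac{m(n/2)\pi}{n} = \frac{m\pi}{2}$ is an odd multiple of $\pi/2$ and that term vanishes, leaving the pairing argument intact. (Alternatively, since $m$ is invertible modulo $n$, the map $k \mapsto mk \bmod n$ permutes $\{1,\dots,n-1\}$, and $\pi$-periodicity of $\cot$ reduces the claim to $\sum_{j=1}^{n-1}\cot(j\pi/n)=0$, which again follows from the pairing $j \leftrightarrow n-j$.) I do not anticipate any real obstacle here: the statement is elementary and the proof is essentially a one-line symmetry computation, the only subtlety being the well-definedness check above.
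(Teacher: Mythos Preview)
Your proof is correct and essentially matches the paper's: the paper first uses that $k\mapsto mk\bmod n$ permutes $\{1,\dots,n-1\}$ (your parenthetical alternative) to reduce to $\sum_{j=1}^{n-1}\cot(j\pi/n)$, then applies the pairing $j\leftrightarrow n-j$ with $\cot(\pi-x)=-\cot(x)$. Your primary argument is in fact a slight shortcut, applying the involution $k\mapsto n-k$ directly via $\cot(m\pi-x)=-\cot(x)$ without first stripping out $m$; both routes are the same symmetry computation.
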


\begin{proof}
As $m,n$ are coprime, we know that the map $k \mapsto mk (n)$, i.e. the multiplication by $m$ modulo $n$, is one-to-one from $\{ 1,\ldots, n-1 \}$ onto itself, so 
\begin{equation*}
    \sum_{k=1}^{n-1} \cot\left(\frac{m k \pi}{n}\right)= \sum_{l=1}^{n-1} \cot\left(\frac{l \pi}{n}\right).
\end{equation*}
The change of indices $l \mapsto n-l$ and the formula $\cot(\pi-x)=-\cot(x)$ allow us to conclude.
\end{proof}

We can now complete the proof of Theorem \ref{th:Vasyunin}.
For $n \ge 1$, we set $E_n(t) = \frac{1}{nt}-\frac{1}{e^{nt}-1}$. We have 
\begin{equation*}
     I(m,n) = \int_0^\infty \left(E_m(t)-\frac{1}{m}E_1(t)\right)\left(E_n(t)-\frac{1}{n}E_1(t)\right) dt,
\end{equation*}
and Lemma~\ref{prop:ImnCoprime} gives a formula for $I(m,n)$. Moreover, Lemma~\ref{prop:I1n} gives a formula for $J(n) = \int_0^\infty E_n(t)E_1(t) dt$. In particular $J(1)=2 \cal C$. We now notice that
\begin{equation*}
    mn \int_0^\infty E_n(t) E_m(t) dt = mn I(m,n)+mJ(n)+nJ(m)-mnJ(1).
\end{equation*}
Adding up everything and using Lemma~\ref{lem:cotanzero} conclude the proof.

\section{Theorem~\ref{th:Vasyunin} for ${\rm gcd}(m,n)\neq 1$}

If $m,n$ are not coprime, the statement of Theorem~\ref{th:Vasyunin} cannot be correct, because the term $\cot\left(\frac{mk\pi}{n}\right)$ is not defined if $\frac{mk}{n}$ is an integer. However, we have the following:

\begin{lemma}
 \label{prop:limcotan}
Given integers $a,p,q \ge 1$, we have 
\begin{equation*}
\lim_{\lambda \rightarrow \frac{p}{q}} \cot(a \lambda q \pi) a \lambda q \pi + \cot\left(a \frac{1}{\lambda} p \pi \right)a \frac{1}{\lambda} p \pi = 1.
\end{equation*}
\end{lemma}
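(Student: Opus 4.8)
The plan is to parametrise the limit by setting $\lambda = \tfrac pq + \ve$ and to expand each of the two summands as $\ve \to 0$. As $\lambda \to \tfrac pq$ one has $a\lambda q\pi \to ap\pi \in \pi\ZZ$ and $a\lambda^{-1}p\pi \to aq\pi \in \pi\ZZ$, so both cotangents blow up; the key point is that the resulting simple poles in $\ve$ cancel, while the finite parts add up to $1$. (For $\ve$ small and nonzero both arguments stay strictly between consecutive multiples of $\pi$, so the expression is well defined all along the limit, from either side.)

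For the first summand I would exploit the $\pi$-periodicity of the cotangent: since $a\lambda q\pi = ap\pi + aq\pi\ve$ with $ap \in \ZZ$, we get $\cot(a\lambda q\pi) = \cot(aq\pi\ve)$, hence, using only $\cot u = \tfrac1u + O(u)$ as $u\to 0$ and multiplying out while keeping the full linear factor $ap\pi + aq\pi\ve$,
\[
a\lambda q\pi\,\cot(a\lambda q\pi) = (ap\pi + aq\pi\ve)\,\cot(aq\pi\ve) = \frac{p}{q}\cdot\frac{1}{\ve} + 1 + O(\ve).
\]

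For the second summand I would first expand $\lambda^{-1} = \tfrac{q}{p+q\ve}$, so that $a\lambda^{-1}p\pi = \tfrac{apq\pi}{p+q\ve} = aq\pi + t$ with $t = -\tfrac{aq^2\pi\ve}{p+q\ve} \to 0$. Again by $\pi$-periodicity, $\cot(a\lambda^{-1}p\pi) = \cot(t)$, whence
\[
a\lambda^{-1}p\pi\,\cot(a\lambda^{-1}p\pi) = (aq\pi + t)\,\cot(t) = \frac{aq\pi}{t} + 1 + O(\ve) = -\frac{p+q\ve}{q\ve} + 1 + O(\ve) = -\frac{p}{q}\cdot\frac{1}{\ve} + O(\ve).
\]
Adding the two expansions, the $\tfrac pq\cdot\tfrac1\ve$ terms cancel and the remainder tends to $1$, which is exactly the claimed identity.

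The computation is entirely elementary; the only place that needs a little care — the main obstacle, such as it is — is the bookkeeping of the constant term. One must keep the first-order correction both in the linear prefactor $aq\pi + t$ and, crucially, inside $t = -\tfrac{aq^2\pi\ve}{p+q\ve}$ rather than replacing it by $-\tfrac{aq^2\pi\ve}{p}$: the former produces $\tfrac{aq\pi}{t} = -\tfrac{p+q\ve}{q\ve} = -\tfrac pq\cdot\tfrac1\ve - 1$, and it is precisely this extra $-1$, together with the $+1$ contributed by $u\cot u \to 1$ in each of the two summands, that makes the limit equal $1$ (and not, say, $2$). A convenient way to organise this is to observe once and for all that the function $x\cot x$ has the form $\tfrac{k\pi}{x-k\pi} + 1 + O(x-k\pi)$ near each $k\pi$ with $k\in\ZZ$, and then apply this local model at $k = ap$ and $k = aq$ respectively.
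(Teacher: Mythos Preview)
Your proof is correct and follows essentially the same route as the paper: both reduce to the $\pi$-periodicity of $\cot$ and the expansion $\cot u = \tfrac1u + o(1)$, the only difference being that the paper parametrises by $h = \lambda q - p$ (your $q\ve$) and notes $\tfrac{p}{\lambda} - q = -\tfrac{h}{\lambda}$, which makes the cancellation $\tfrac{\lambda q}{h} - \tfrac{p}{h} = 1$ immediate without separating out the pole and constant terms.
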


\begin{proof}
This limit comes from the expansion $\cot(x) = \frac{1}{x}+o(1)$ when $x \rightarrow 0$. We set $h = \lambda q-p $. Then $\frac{1}{\lambda}p-q = -\frac{1}{\lambda} h$, so we have:
\begin{eqnarray*}
\cot(a \lambda q \pi) a \lambda q \pi + \cot\left(a \frac{1}{\lambda} p \pi \right)a \frac{1}{\lambda} p \pi &=& \cot(a h \pi) a \lambda q \pi + \cot\left(-a \frac{1}{\lambda} h \pi \right)a \frac{1}{\lambda} p \pi \\
&=& \frac{a \lambda q \pi}{a h \pi} + \frac{a \frac{1}{\lambda} p \pi}{-a \frac{1}{\lambda} h \pi} + o_{h \rightarrow 0}(1) \\
&=& 1+ o_{h \rightarrow 0}(1).
\end{eqnarray*}
\end{proof}

Lemma~\ref{prop:limcotan} is only used to explain the abusive notations below:

\begin{theorem}
The Vasyunin-type formula stated in Theorem~\ref{th:Vasyunin} is valid for any integers $m,n \geq 1$ with the following abuse of notations: if $k \in \{1,\ldots,n-1\}$ and $l \in \{1,\ldots,m-1\}$ are such that $k/l=n/m$, we set:
\begin{equation} \label{eq:cotanlimit}
    \cot\left(\frac{m k \pi}{n}\right) \frac{m k \pi}{n}+\cot\left(\frac{n l \pi}{m}\right) \frac{nl \pi}{m} = 1.
\end{equation}
\end{theorem}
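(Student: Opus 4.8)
The plan is to reduce the identity to the coprime case proved in Theorem~\ref{th:Vasyunin} by a rescaling, and then to track carefully how the two cotangent sums split into a ``regular'' part and the paired singular terms. Write $d=\gcd(m,n)$ and $m=da$, $n=db$ with $\gcd(a,b)=1$, and set $E_u(t)=\frac{1}{ut}-\frac{1}{e^{ut}-1}$. Since $E_{du}(t/d)=E_u(t)$, the change of variables $t\mapsto t/d$ gives $mn\int_0^\infty E_m(t)E_n(t)\,dt=d\cdot ab\int_0^\infty E_a(t)E_b(t)\,dt$. As $(a,b)$ is a coprime pair --- possibly with $a=1$ or $b=1$, which is permitted in Theorem~\ref{th:Vasyunin} --- the right-hand side equals $d$ times the Vasyunin expression attached to $(a,b)$. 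It therefore suffices to show that the right-hand side of Theorem~\ref{th:Vasyunin} written for $(m,n)$, interpreted via the convention~\eqref{eq:cotanlimit}, equals $d$ times the right-hand side written for $(a,b)$.

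The elementary terms already scale correctly: $\mathcal{C}(m+n)=d\,\mathcal{C}(a+b)$ and $\frac{m-n}{2}\log\frac{n}{m}=d\cdot\frac{a-b}{2}\log\frac{b}{a}$. For the first cotangent sum, $\frac{m}{n}=\frac{a}{b}$ gives $\sum_{k=1}^{n-1}\frac{mk}{n}\cot\!\left(\frac{mk\pi}{n}\right)=\sum_{k=1}^{db-1}\frac{ak}{b}\cot\!\left(\frac{ak\pi}{b}\right)$. Writing $k=qb+r$ with $0\le q\le d-1$ and $0\le r\le b-1$, and using the $\pi$-periodicity of $\cot$, the terms with $r\neq0$ contribute $\sum_{q=0}^{d-1}\sum_{r=1}^{b-1}\left(aq+\frac{ar}{b}\right)\cot\!\left(\frac{ar\pi}{b}\right)=d\sum_{r=1}^{b-1}\frac{ar}{b}\cot\!\left(\frac{ar\pi}{b}\right)$, because $\sum_{r=1}^{b-1}\cot\!\left(\frac{ar\pi}{b}\right)=0$ by Lemma~\ref{lem:cotanzero} (applicable since $\gcd(a,b)=1$). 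Hence the regular part of the first sum is $d$ times the first sum for $(a,b)$, and symmetrically for $\sum_{l=1}^{m-1}\frac{nl}{m}\cot\!\left(\frac{nl\pi}{m}\right)$ with $a$ and $b$ interchanged.

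It then remains to handle the leftover terms, which are a priori undefined and are precisely those with $r=0$: namely $k=qb$ in the first sum and $l=qa$ in the second, for $q=1,\dots,d-1$. These satisfy $k\le n-1$, $l\le m-1$ and $k/l=b/a=n/m$, so for each $q$ they form exactly the pair governed by~\eqref{eq:cotanlimit}; moreover $\frac{mk\pi}{n}=aq\pi$ and $\frac{nl\pi}{m}=bq\pi$, so the convention gives $\frac{mk}{n}\cot\!\left(\frac{mk\pi}{n}\right)+\frac{nl}{m}\cot\!\left(\frac{nl\pi}{m}\right)=\frac{1}{\pi}$ for each $q$. Summing over $q$, these terms contribute $-\frac{\pi}{2}\cdot\frac{d-1}{\pi}=-\frac{d-1}{2}$ to the right-hand side, which combines with the leading constant $-\frac12$ to give $-\frac{d}{2}=d\cdot\left(-\frac12\right)$. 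Assembling all the pieces shows the right-hand side for $(m,n)$ equals $d$ times the right-hand side for $(a,b)$, hence equals $mn\int_0^\infty E_mE_n\,dt$ by Theorem~\ref{th:Vasyunin}, as claimed. The only real obstacle is the index bookkeeping in this last paragraph --- verifying that $k=qb$ and $l=qa$ are exactly the singular pairs of~\eqref{eq:cotanlimit} and that no regular term is double-counted --- rather than any analytic difficulty; the degenerate cases $d=1$, $a=1$ or $b=1$ are handled automatically, since the relevant regular sums or singular families are then empty.
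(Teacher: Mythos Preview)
Your proof is correct and follows essentially the same approach as the paper: reduce the integral to the coprime case by rescaling, check that the elementary terms scale by the gcd, split each cotangent sum via Euclidean division into a regular part (handled by $\pi$-periodicity and Lemma~\ref{lem:cotanzero}) and the $d-1$ singular pairs (handled by the convention~\eqref{eq:cotanlimit}), and verify that the singular contribution $-\frac{d-1}{2}$ combines with $-\frac12$ to give $d\cdot(-\frac12)$. The bookkeeping identifying the singular pairs $(k,l)=(qb,qa)$ for $q=1,\dots,d-1$ is exactly what the paper does as well.
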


\begin{proof}
We only need to consider the case where $m,n$ are not coprime. We set $r={\rm gcd}(m,n)$, $m=rp$ and $n=rq$. The following is obvious:
\begin{equation*}
  mn\int_0^\infty \left(\frac{1}{mt}-\frac{1}{e^{mt}-1}\right)\left(\frac{1}{nt}-\frac{1}{e^{nt}-1}\right) dt =  r pq   \int_0^\infty \left(\frac{1}{pt}-\frac{1}{e^{pt}-1}\right)\left(\frac{1}{qt}-\frac{1}{e^{qt}-1}\right) dt, 
\end{equation*}
\begin{equation*}
    \frac{m-n}{2}\log\left(\frac{n}{m}\right)+ C(n+m) = r \left(\frac{p-q}{2}\log\left(\frac{q}{p}\right)+ C(q+p)\right).
\end{equation*}
We then notice that there exist exactly $r-1$ couples $(k,l) \in \{1,\ldots,n-1\} \times \{1,\ldots,m-1\}$ such that $k/l=n/m$. Such couples are given by $(k,l)=(a q,a p)$ with $a \in \{1,\ldots,r-1\}$, and equation~\eqref{eq:cotanlimit} gives:
\begin{equation*}
    -\frac{1}{2}\sum_{a=1}^{r-1} \cot\left(\frac{m aq \pi}{n}\right) \frac{m aq \pi}{n}+\cot\left(\frac{n ap \pi}{m}\right) \frac{nap \pi}{m} = -\frac{r-1}{2}.
\end{equation*}
It remains to compute the sum $- \frac{m}{2} \sum_{k \in K} \cot\left(\frac{m k \pi}{n}\right) \frac{k \pi}{n}$ when $K$ is the set of $k \in \{1,\ldots,n-1\}$ which are not multiples of $q$. Any $k$ in this set is written in a unique way under the form $k=aq+i$, with $a \in \{0,\ldots,r-1\}$ and $i \in \{1,\ldots,p-1\}$, so we have 
\begin{eqnarray*}
    - \frac{m}{2} \sum_{k \in K} \cot\left(\frac{m k \pi}{n}\right) \frac{k \pi}{n} &=& - \frac{m}{2}  \sum_{i=1}^{p-1} \sum_{a=0}^{r-1} \cot\left(\frac{m (aq+i) \pi}{n}\right) \frac{(aq+i) \pi}{n} \\
    &=& - \frac{p}{2}  \sum_{i=1}^{p-1} \sum_{a=0}^{r-1} \cot\left(\frac{p i \pi}{q}\right) \frac{(aq+i) \pi}{q} \\
    &=& - \frac{p}{2}  \sum_{i=1}^{p-1}\cot\left(\frac{p i \pi}{q}\right) \left(\frac{ar(r-1) \pi }{2}+\frac{ri \pi}{q} \right) \\
    &=& -r \frac{p}{2}  \sum_{i=1}^{p-1}\cot\left(\frac{p i \pi}{q}\right)\frac{i \pi}{q}.
\end{eqnarray*}
We used the $\pi$-periodicity of the cotangent function and Lemma~\ref{lem:cotanzero}. Similarly, we have
\begin{equation*}
    - \frac{n}{2} \sum_{l \in L} \cot\left(\frac{n l \pi}{m}\right) \frac{l \pi}{m} = -r \frac{q}{2}  \sum_{j=1}^{q-1}\cot\left(\frac{q j \pi}{p}\right)\frac{j \pi}{p},
\end{equation*}
where $L$ is the set of $l \in \{1,\ldots,m-1\}$ which are not multiples of $p$. We finally obtain that
\begin{equation*}
    \frac{m-n}{2}\log\left(\frac{n}{m}\right)+ C(n+m) -\frac{1}{2} - \frac{m}{2} \sum_{k=1}^{n-1} \cot\left(\frac{m k \pi}{n}\right) \frac{k \pi}{n}- \frac{n}{2} \sum_{l=1}^{m-1} \cot\left(\frac{n l \pi}{m}\right) \frac{l \pi}{m}
\end{equation*}
\begin{equation*}
    = r\left(\frac{p-q}{2}\log\left(\frac{q}{p}\right)+ C(q+p)-\frac{1}{2}-\ \frac{p}{2}  \sum_{i=1}^{p-1}\cot\left(\frac{p i \pi}{q}\right)\frac{i \pi}{q}-\frac{q}{2}  \sum_{j=1}^{q-1}\cot\left(\frac{q j \pi}{p}\right)\frac{j \pi}{p}\right)
\end{equation*}
\begin{equation*}
    =  r pq   \int_0^\infty \left(\frac{1}{pt}-\frac{1}{e^{pt}-1}\right)\left(\frac{1}{qt}-\frac{1}{e^{qt}-1}\right) dt = mn\int_0^\infty \left(\frac{1}{mt}-\frac{1}{e^{mt}-1}\right)\left(\frac{1}{nt}-\frac{1}{e^{nt}-1}\right) dt,
\end{equation*}

which is the general Vasyunin-type formula we wanted.
\end{proof}

\section*{Acknowledgement} The authors warmly thank Michel Balazard for communicating to them his proof of Proposition \ref{bal18}, and also thank Sandro Bettin for pointing out the reference \cite{ABB17}.


\begin{thebibliography}{5}


\bibitem[ABB17]{ABB17} J.S. Auli, A. Bayad, M. Beck. Reciprocity theorems for Bettin-Conrey sums. {\em Acta Arith.} 181 (2017), no. 4, 297--319.

\bibitem[BD03]{BD03} L. B{\'a}ez-Duarte. A strengthening of the Nyman-Beurling criterion for the Riemann hypothesis, {\em Rend. Mat. Ac. Lincei}, S. 9, 14 (2003) 1, 5-11.

\bibitem[BDBLS00]{BDBLS00} L. B{\'a}ez-Duarte, M. Balazard, B. Landreau and \'E. Saias. Notes sur la fonction $\zeta$ de Riemann, 3.
(French) [Notes on the Riemann $\zeta$-function, 3] {\em Adv. Math.}, 149 (2000), no. 1, 130--144.

\bibitem[BDBLS05]{BDBLS05} L. B{\'a}ez-Duarte, M. Balazard, B. Landreau and \'E. Saias. \'{E}tude de l'autocorr{\'e}lation multiplicative de la fonction "partie fractionnaire". (French) {\em The Ramanujan Journal}, 9(1) (2005), pp. 215--240.

\bibitem[Bal18]{Bal18} M. Balazard. Application d'une formule de Binet. {\em Personal communication}, 2018.

\bibitem[BC13a]{BC13a} S. Bettin and J.B. Conrey. Period functions and cotangent sums. {\em Algebra |\& Number Theory}, 7(1) (2013), 215-242.

\bibitem[Bet15]{Bet15} S. Bettin. On the distribution of a cotangent sum. {\em International Mathematics Research Notices}, 2015(21), 11419-11432.

\bibitem[BC13]{BC13} S. Bettin and J.B. Conrey. A reciprocity formula for a cotangent sum.
{\em Int. Math. Res. Not. IMRN} 2013, no. 24, 5709--5726.

\bibitem[Beu55]{Beu55} A. Beurling. A closure problem related to the Riemann Zeta-function. {\em Proceedings of the National Academy of Sciences}, 41(5) (1955), pp. 312--314.

\bibitem[DH18]{DH18} S. Darses and  E. Hillion. On a probabilistic Nyman-Beurling criterion. {\em arXiv preprint} arXiv:1805.06733 (2018).

\bibitem[MR16]{MR16} H. Maier and M.T. Rassias. Generalizations of a cotangent sum associated to the Estermann zeta function. {\em Communications in Contemporary Mathematics}, 18(01) (2016), 1550078.


\bibitem[Nym50]{Nym50} B. Nyman. On the one-dimensional translation group and semi-group in certain function spaces. {\em Thesis, University of Uppsala}, 1950.

\bibitem[Vas95]{Vas95} V.I. Vasyunin. On a biorthogonal system associated with the Riemann hypothesis. (Russian) {\em Algebra i Analiz} 7, no. 3 (1995): 118-35; translation in {\em St. Petersburg Mathematical Journal} 7, no. 3 (1996): 405-19.

\bibitem[WW27]{WW27} E.T. Whittaker, and G.N. Watson. A course of modern analysis, 4th ed., Cambridge University Press, 1927.

\end{thebibliography}
\end{document}